\def\h{ {\cal H} }
\def\a{ {\cal A} }
\def\l{ {\cal L} }
\def\n{ {\cal N} }
\def\v{ {\cal V} }
\def\b{ {\cal B} }
\def\u{ {\cal U} }
\def\t{ {\cal T} }
\def\s{ {\cal S} }
\def\p{ {\cal P} }
\def\k{ {\cal K} }
\def\f{ {\cal F} }
\def\c{ {\cal C} }
\def\j{ {\cal J} }
\newtheorem{teo}{Theorem}[section]
\newtheorem{prop}[teo]{Proposition}
\newtheorem{lem}[teo]{Lemma}
\newtheorem{coro}[teo]{Corollary}
\newtheorem{defi}[teo]{Definition}
\theoremstyle{definition}
\newtheorem{rem}[teo]{Remark}
\newtheorem{ejem}[teo]{Example}
\newtheorem{ejems}[teo]{Examples}
\title{Essentially orthogonal subspaces}
\author{Esteban Andruchow and Gustavo Corach}
\begin{document}

\maketitle 

\begin{abstract}
We study the set $\c$ consisting of pairs of orthogonal projections $P,Q$ acting in a Hilbert space $\h$  such that $PQ$ is a compact operator. These pairs have a rich geometric structure which we describe here. They are parted in three subclasses: $\c_0$ which consists of pairs where $P$ or $Q$ have finite rank, $\c_1$ of pairs such that $Q$ lies in the restricted Grassmannian (also called Sato Grassmannian) of the polarization $\h=N(P)\oplus R(P)$, and $\c_\infty$. Belonging to this last subclass one has the pairs
$$
P_If=\chi_If  ,\  \ Q_Jf= \left(\chi_J \hat{f}\right)\check{\ } , \ \ f\in L^2(\mathbb{R}^n),
$$ 
where $I, J\subset \mathbb{R}^n$ are sets of finite Lebesgue measure, $\chi_I, \chi_J$ denote the corresponding characteristic functions and $\hat{\ } , \check{\ }$ denote the Fourier-Plancherel transform $L^2(\mathbb{R}^2)\to L^2(\mathbb{R}^2)$ and its inverse. 
We characterize the connected components of these classes: the components of $\c_0$ are parametrized by the rank, the components of $\c_1$ are parametrized by the Fredholm index of the pairs, and $\c_\infty$ is connected. We show that these subsets are (non complemented) differentiable submanifolds of $\b(\h)\times \b(\h)$.
\end{abstract}

\bigskip

{\bf 2010 MSC:}  58B20, 47B15, 42A38, 47A63.

{\bf Keywords:}  Projections, pairs of projections, compact operators, Grasmann manifold.

\section{Introduction}
The study of pairs of subspaces of a Hilbert space  $\h$ or, more generally, pairs of orthogonal projections in a C$^*$-algebra started in the early times of spectral theory with Dixmier \cite{dixmier}. Some efforts towards finding more transparent proofs of Dixmier's theorems are due to Davis \cite{davis}, Pedersen \cite{pedersen}, Halmos \cite{halmos}, Raeburn and Sinclair \cite{raesin},  Avron, Seiler and Simon \cite{ass}, Amrein and Sinha \cite{amreinsinha}, among many others. The excellent survey of B\"ottcher and Spitkovsky \cite{gentle} contains a complete description and bibliography. This theory  is  concerned not only with two projections $P$, $Q$ in $\b(\h)$ (the algebra of bounded linear operators in $\h$) but also with the products $PQ$ and $PQP$. This paper is an addition to this part of the theory, where $PQ$ is supposed to be compact.  The interest in this type of products is not new. Consider the following examples.

\begin{ejems}\label{el ejemplo}

\begin{enumerate}
\item
Let $I,J\subset \mathbb{R}^n$ be Lebesgue-measurable sets of finite measure. Let $P_I,Q_J$ be the projections in $L^2(\mathbb{R}^n,dx)$ given by
$$
P_If=\chi_If \ \ \hbox{ and } \ \ Q_Jf= \left(\chi_J \hat{f}\right)\check{\ },
$$
where $\chi_L$ denotes the characteristic function of the set $L$.
Equivalently, denoting by $U_\f$ the Fourier transform regarded as a unitary operator acting in $L^2(\mathbb{R}^n,dx)$, then
$$
P_I=M_{\chi_I} \ \hbox{ and } Q_J=U_\f^*M_{\chi_J}U_\f .
$$
In  \cite{donoho} (Lemma 2) it is proven that $P_IQ_J$ is a Hilbert-Schmidt operator. See also \cite{folland}. These products play a relevant role in operator theoretic formulations of the uncertainty principle \cite{donoho}, \cite{folland}.
\item
Let $\h=L^2(\mathbb{T},dt)$  
where $\mathbb{T}$ is the unit circle, and consider the decomposition 
$$
\h=\h_-\oplus\h_+,
$$
where $\h_+$ is the Hardy space. Benote by $P_+$ and $P_-$ the orthogonal projections onto $\h_+$ and $\h_-$, respectively. Let  $\varphi, \psi$ be continuous  functions on $\mathbb{T}$ with $|\varphi(e^{it})|=|\psi(e^{it})|=1$ for all $t$, and
$$
P=P_{\varphi\h_+}^\perp=1-P_{\varphi\h_+} \ , \ \ Q=P_{\psi\h_+}.
$$
Since $\varphi$ and $\psi$ are unimodular, the multiplication operators $M_\varphi$, $M_\psi$ are unitary in $\h$. Then
$$
P_{\varphi\h_+}=P_{M_\varphi(\h_+)}=M_\varphi P_+ M_\varphi^*=M_\varphi P_+ M_{\bar{\varphi}},
$$
and similarly for $P_{\psi \h_+}$. 
Then 
$$
P_{\varphi\h_+}^\perp=1-P_{\varphi\h_+}=M_\varphi (1-P_+) M_{\bar{\varphi}}=M_\varphi P_- M_{\bar{\varphi}}.
$$
Therefore
$$
PQ=M_\varphi P_- M_{\bar{\varphi}}M_\psi P_+ M_{\bar{\psi}}=M_\varphi P_- M_{\bar{\varphi}\psi}P_+ M_{\bar{\psi}}.
$$
Since $P_- M_{\bar{\varphi}\psi}|_{\h_+}$ is a Hankel operator with continuous symbol, it follows  by Hartman's theorem \cite{hartman} that it is  compact (see also Theorem 5.5 in  \cite{peller}). Thus $PQ$ is compact. 
\end{enumerate}
We shall see below that these two examples are of  different nature.
\end{ejems}

  Our main goal in this paper is the study of the geometry of the sets
  $$
  \c=\{(P,Q): P,Q\hbox{ are orthogonal projections and } PQ \hbox{ is compact}\}
  $$
  and, for each projection $P$,
  $$
  \c(P)=\{Q: PQ \hbox{ is compact}\}.
  $$
Let us describe the contents of the paper.

In Section 2 we state elementary properties of pairs $P, Q$ in $\c$: the spectral description of the entries of $Q$,  written as a $2\times 2$ matrix in terms of $P$, and the partition of the class $\c$ in three subclasses $\c_0, \c_1$ and $\c_\infty$.  In Section 3 we recall the so-called  Halmos decomposition of $\h$ given by a pair of subspaces, and specialize it  to the case where the corresponding pair of projections lies in $\c$. In Section 4 we give a spatial characterization of $\c$ in the following sense: given orthogonal projections $P$, $Q$, denote $\s=R(P)$ and $\t=R(Q)$;  then $(P,Q)$ belongs to $\c$ if and only if there exist orthonormal bases $\{\xi_n\}$, $\{\psi_n\}$ of $\s$ and $\t$, respectively, such that $\langle\xi_n,\psi_k\rangle=0$ if $n\ne k$, and $\langle\xi_n,\psi_n\rangle\to 0$.  In Section 5 we introduce the action of the restricted unitary group induced by $P$ on projections $Q\in\c(P)$. In Section 6 we study the class $\c_1$, on which an index is defined, and prove that the connected components of $\c_1$ are parametrized by this index. It is shown that Hankel pairs as in Example 1.1.2 belong to $\c_1$.  In Section 7 we study the class $\c_\infty$, and prove that it is connected. We also prove that the pairs $(P_I,Q_J)$ like in Example 1.1.1 belong to $\c_\infty$. In Section 8 we prove that the sets $\c$ and $\c(P)$ are (non complemented) C$^\infty$-differentiable submanifolds of $\b(\h)$.
\section{Elementary properties}

Let $\h$ be a Hilbert space, $\b(\h)$ the algebra of bounded linear operators in $\h$, $\k(\h)$ the ideal of compact operators and $\p(\h)$ the set of selfadjoint (orthogonal) projections. If $\s$ is a closed subspace of $\h$, the orthogonal projection onto $\s$ is denoted by $P_\s$.
Given $P\in\p(\h)$, operators acting in $\h$ can be written as $2\times 2$ matrices. For instance, any projection $Q$  is of the form
$$
Q=\left( \begin{array}{cc} a & x \\ x^* & b \end{array} \right)
$$
where the fact that $Q$ is a projection is equivalent to the relations
\begin{equation}\label{relaciones proyeccion}
\left\{ \begin{array}{l} xx^*=a-a^2 \\ x^*x=b-b^2  \\ ax+xb=x \end{array} \right. ,
\end{equation}
with $0\le a\le 1_{R(P)}$, $0\le b\le 1_{N(P)}$ and $\|x\|\le 1/2$. The fact that $PQ$ is compact means that $a\in\b(R(P))$ and $x\in\b(N(P),R(P))$ are compact. Here and throughout $R(T)$ and $N(T)$ denote the range and the nullspace of $T$, respectively.

Let us show another example of pairs of projections with compact product.
\begin{ejem}\label{ejemploClaseE}
Let $\h=\l\times\s$ and fix a compact operator $K:\s\to \l$. Consider the idempotent $E=E_K$ given by the matrix
$$
E=\left( \begin{array}{cc}  1_\l & K \\ 0 & 0 \end{array} \right).
$$  
Then $P=P_{R(E)}=P_\l$ and $Q=P_{N(E)}$ satisfy that $PQ$ is compact. Indeed, straightforward computations show that $R(E)=\l$ and that
$$
P_{N(E)}=(1-E_K)(1-E_K-E_K^*)^{-1}=\left( \begin{array}{cc}  KK^*(1+KK^*)^{-1} & -K(1+K^*K)^{-1} \\ -K^*(1+KK^*)^{-1} & (1+K^*K)^{-1} \end{array} \right). 
$$
Then 
$$
PQ=\left( \begin{array}{cc}   KK^*(1+KK^*)^{-1} & -K(1+K^*K)^{-1} \\ 0 & 0 \end{array} \right),
$$
which is clearly compact. The singular values of $PQ$ are the square roots of the eigenvalues of 
$$
PQP=\left( \begin{array}{cc}   KK^*(1+KK^*)^{-1} & 0 \\ 0 & 0 \end{array} \right),
$$
i.e., those of $KK^*(1+KK^*)^{-1}$, which have the same asymptotic behaviour near zero as the singular values of $K$. 
\end{ejem}

Let us collect several elementary properties of pairs in $\c$. First note that $b$ (in the matrix expression of $Q$ in terms of $P$) may not be compact. It is positive and $b-b^2$ is compact. This implies that it can be diagonalized, and that its spectrum consists of eigenvalues which can only accumulate (eventually) at $0$ or $1$, plus $0$ and $1$ which may not be eigenvalues. All spectral values different from $0$ or $1$ have finite multiplicity.

Moreover, there is a relationship between eigenvalues of $a$ and $b$, which we state in the following elementary lemma.
\begin{lem}\label{simetriaespectros}
If $\lambda \ne 0,1$ is an eigenvalue of $b$, then $1-\lambda$ is an eigenvalue of $a$, and the operator 
$x|_{N(b-\lambda 1_{N(P)})}$ maps $N(b-\lambda 1_{N(P)})$ isomorphically onto $N(a-(1-\lambda) 1_{R(P)})$. Thus, in particular, these eigenvalues have the same multiplicity. Moreover,
$$
xP_{N\left(b-\lambda 1_{N(P)}\right)}=P_{N\left(a-(1-\lambda) 1_{R(P)}\right)}x.
$$
\end{lem}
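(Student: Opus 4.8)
The plan is to exploit the relations (\ref{relaciones proyeccion}) as intertwining identities between $a$ and $b$ through $x$. First I would rewrite the third relation $ax+xb=x$ in the two equivalent forms
$$ax=x\,(1_{N(P)}-b), \qquad b\,x^*=x^*\,(1_{R(P)}-a),$$
the second obtained by taking adjoints. These say that $x$ conjugates $1_{N(P)}-b$ into $a$, and $x^*$ conjugates $1_{R(P)}-a$ into $b$, on the appropriate subspaces.

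Next, fix an eigenvalue $\lambda\neq 0,1$ of $b$; since $0\le b\le 1_{N(P)}$ we have $\lambda\in(0,1)$, hence $\lambda(1-\lambda)>0$. If $\xi\in N(b-\lambda 1_{N(P)})$ then $a(x\xi)=x(1_{N(P)}-b)\xi=(1-\lambda)\,x\xi$, so $x$ maps $N(b-\lambda 1_{N(P)})$ into $N(a-(1-\lambda)1_{R(P)})$, and symmetrically $x^*$ maps $N(a-(1-\lambda)1_{R(P)})$ into $N(b-\lambda 1_{N(P)})$. I would then bring in the other two relations: on $N(b-\lambda 1_{N(P)})$ one has $x^*x=b-b^2=\lambda(1-\lambda)\,1_{N(P)}$, and on $N(a-(1-\lambda)1_{R(P)})$ one has $xx^*=a-a^2=\lambda(1-\lambda)\,1_{R(P)}$. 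Hence the restrictions $x|_{N(b-\lambda 1_{N(P)})}$ and $x^*|_{N(a-(1-\lambda)1_{R(P)})}$ are inverse to each other up to the nonzero scalar $\lambda(1-\lambda)$; in particular each is a linear isomorphism between the two eigenspaces. It follows that $N(a-(1-\lambda)1_{R(P)})\neq\{0\}$, i.e. $1-\lambda$ is an eigenvalue of $a$, and the two eigenspaces have the same dimension, which is finite since $a$ is compact and $1-\lambda\neq 0$.

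It remains to establish the projection identity. Put $M=N(b-\lambda 1_{N(P)})\subseteq N(P)$ and $M'=N(a-(1-\lambda)1_{R(P)})\subseteq R(P)$, so that $x(M)=M'$ and $x^*(M')=M$ by the previous step. I would then note that $x$ also carries $N(P)\ominus M$ into $R(P)\ominus M'$: for $\zeta\in N(P)$ with $\zeta\perp M$ and $\eta\in M'$ we have $\langle x\zeta,\eta\rangle=\langle\zeta,x^*\eta\rangle=0$, because $x^*\eta\in M$. Decomposing an arbitrary vector of $N(P)$ as a sum of a vector in $M$ and a vector in $N(P)\ominus M$ and applying $x$ term by term then yields $x\,P_M=P_{M'}\,x$, which is precisely the asserted equality $x\,P_{N(b-\lambda 1_{N(P)})}=P_{N(a-(1-\lambda)1_{R(P)})}\,x$.

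Everything here follows directly from (\ref{relaciones proyeccion}) together with the positivity of $b$; the only step that requires a little care is the last one, namely checking that $x$ respects both $M$ and its orthogonal complement, so that $x$ is "block diagonal" relative to $N(P)=M\oplus(N(P)\ominus M)$ and $R(P)=M'\oplus(R(P)\ominus M')$, whence the intertwining of the spectral projections is immediate.
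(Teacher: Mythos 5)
Your proof is correct, and while it starts from the same observation as the paper, it finishes both halves of the lemma by a genuinely different route. The common starting point is reading $ax+xb=x$ as the intertwining $ax=x(1_{N(P)}-b)$, so that $x$ maps $N(b-\lambda 1_{N(P)})$ into $N(a-(1-\lambda)1_{R(P)})$. From there the paper proves injectivity of $x$ on the eigenspace by computing $N(x)=N(b)\oplus N(b-1_{N(P)})$, deduces one inequality of dimensions, and appeals to a symmetric argument with $x^*$ for the reverse inequality; you instead use the first two relations of (\ref{relaciones proyeccion}) to get $x^*x=\lambda(1-\lambda)1$ on $M=N(b-\lambda 1_{N(P)})$ and $xx^*=\lambda(1-\lambda)1$ on $M'=N(a-(1-\lambda)1_{R(P)})$, so that $x|_M$ and $x^*|_{M'}$ are mutually inverse up to the nonzero scalar $\lambda(1-\lambda)$. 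This yields bijectivity (in particular surjectivity onto $M'$, which the paper's dimension count delivers only implicitly) in one stroke. For the identity $xP_M=P_{M'}x$, the paper expands a vector orthogonal to $M$ in eigenvectors of $b$ (relying on the diagonalizability of $b$, i.e.\ on $b-b^2$ being compact) and checks orthogonality to $M'$ eigenvalue by eigenvalue; your argument $\langle x\zeta,\eta\rangle=\langle\zeta,x^*\eta\rangle=0$ for $\zeta\perp M$, $\eta\in M'$, using only $x^*(M')\subseteq M$, achieves the same with a single adjoint computation and no spectral theory. What your version buys is a purely algebraic proof valid for an arbitrary pair of projections, not just pairs in the class $\c$ (your one use of compactness of $a$, to note finiteness of the multiplicity, is an aside not required by the statement); what the paper's version buys is that it sets up the explicit spectral picture of $a$, $b$ and $x$ that is exploited immediately afterwards in the Remark following the lemma.
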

\begin{proof} 
Let $\xi\in\h$, $\xi\ne 0$, such that $b\xi=\lambda \xi$ (with $\lambda \ne 0,1$). Then,  by the third relation in (\ref{relaciones proyeccion}),  one has
$$
x\xi=ax\xi+xb\xi=ax\xi+\lambda x\xi,  \hbox{ i.e. },  ax\xi=(1-\lambda)x\xi.
$$
Also note that
$$
N(x)=N(x^*x)=N(b-b^2)=N(b)\oplus N(b-1_{N(P)}),
$$ 
and thus $x\xi\ne 0$ is an eigenvector for $a$, with eigenvalue $1-\lambda$, and the map $x|_{N(b-\lambda 1_{N(P)})}$ is injective from $N(b-\lambda 1_{N(P)})$ to $N\left(ax-(1-\lambda) 1_{R(P)}\right)$. Therefore 
$$
\dim\left(N(b-\lambda 1_{N(P)})\right)\le \dim\left(N(a-(1-\lambda) 1_{R(P)}\right).
$$
By a symmetric argument, using $x^*$ (and the relation $bx^*+x^*a=x$), one obtains equality of these dimensions.

Pick now an arbitrary $\xi\in N(P)$ and write $\xi=\xi_1+\xi_2$, with $\xi_1\in\ N(b-\lambda 1_{N(P)})$ and $\xi_2\perp N(b-\lambda 1_{N(P)})$.
Then 
$$
xP_{N(b-\lambda 1_{N(P)})}\xi=x\xi_1.
$$
On the other hand
$$
P_{N\left(a-(1-\lambda) 1_{R(P)}\right)}x\xi_1=x\xi_1,
$$
by the fact proven above. Let us see that $P_{N\left(a-(1-\lambda)1_{R(P)}\right)}x\xi_2=0$, which will prove our claim. Since $\xi_2\perp N(b-\lambda 1_{N(P)})$,  it follows that $\xi_2=\sum_{l\ge 2}\eta_l+\eta_0+\eta_1$, where $\eta_l$, $l\ge 2$, are eigenvectors of $b$ corresponding to eigenvalues $\lambda_l$ different from $0$, $1$ and $\lambda$, $\eta_0\in N(b)$, $\eta_1\in N(b-1_{N(P)})$ (where these two latter may be trivial).
Note  then that  $\eta_0,\eta_1\in N(x)$, and thus
$$
x\xi_2=\sum_{l\ge 2}x\eta_l,
$$
where the (non nil) vectors $x\eta_l$ are eigenvectors of $a$ corresponding to eigenvalues $1-\lambda_l$, different from $0,1$ and $1-\lambda$.   Thus $P_{N\left(a-(1-\lambda) 1_{R(P)}\right)}x\xi_2=0$.
\end{proof}
For an operator $T\in\b(\h)$, let $r(T)=\dim\left(R(T)\right)$ be the rank of $T$ and $n(T)=\dim\left(N(T)\right)$ the nullity of $T$.
\begin{rem}
This result implies that we may write $a$ and $b$ as
\begin{equation}\label{a,b}
\begin{array}{l}
a=\sum_{n\ge 1} \lambda_n P_n + E_1 \\
b=\sum_{n\ge 1} (1-\lambda_n)P'_n  + E_1' \\ 
\end{array}
\end{equation}
where $1>\lambda_n>0$ is a decreasing set, which may be finite  or a sequence converging to $0$, 
$$
r(P_n)=r(P'_n)<\infty \ , E_1=P_{N(a-1_{R(P)})}\  \hbox{ with } \  r(E_1)<\infty,  \ \hbox{ and } E_1'=P_{N(b-1_{N(P)})}. 
$$
Accordingly, the decomposition of the (non selfadjoint) operator $x$ in singular values is
$$
x=\sum_{n\ge 1} \alpha_n (\sum_{j=1}^{k_n}\xi_{n,j}\otimes \xi'_{n,j}),
$$
where $\alpha_n=\sqrt{\lambda_n-\lambda_n^2}$, and   $\{\xi_{n,j}: 1\le j\le k_n\}$ and $\{\xi'_{n,j}: 1\le j\le k_n\}$ are orthonormal systems which span $R(P_n)$ and $R(P'_n)$, respectively.
\end{rem}

Let us first sort out the pairs where either $P$ or $Q$ have finite rank, and which clearly belong to $\c$. 

\begin{rem}\label{caso finito}

\begin{enumerate}
\item
If $r(P)=k<\infty$, then $(P,Q)\in\c$ for any $Q\in\p$. Moreover, if $\left(P(t),Q(t)\right)$ is a continuous path in $\c$, $P(t)$ and $Q(t)$ are continuous paths in $\p$, and thus $r\left(P(t)\right)=k$ and $r\left(Q(t)\right)=l\le \infty$ along the path. If   $l=\infty$, then $n(Q(t))=n(Q)=m\le\infty$ for all $t$. Conversely, suppose that $(P',Q')$ is another pair in $\c$ with $r(P)=r(P')$, $r(Q)=r(Q')$ and $n(Q)=n(Q')$. Then there exists a unitary operator $U$ such that $UPU^*=P'$.  Let $U=e^{iX}$ for some $X^*=X$. Then 
$$
\alpha(t)=(e^{itX}Pe^{-itX},e^{itX}Qe^{-itX})
$$
is a continuous path inside $\c$ with $\alpha(0)=(P,Q)$ and $\alpha(1)=(P',UQU^*)$.  It is clear that $UQU^*$ and $Q'$ have also the same rank and nullity. Thus,  there exists a unitary operator $W=e^{itY}$ (with $Y^*=Y$) such that $WUQ(WU)^*=Q$. Then
$$
\beta(t)=(P',e^{itY}UQU^*e^{-itY})
$$
is another continuous path inside $\c$ joining $(P',UQU^*)$ and $(P',Q')$. It follows that $(P,Q)$ and $(P',Q')$ lie in the same connected component of $\c$. 
\item
If $n(P)=m<\infty$ and $(P,Q)\in\c$, then it must be $r(Q)=l<\infty$. Indeed, in this case $PQ$ is compact and $(1-P)Q$ has finite rank, then $Q=PQ+(1-P)Q$ is compact, i.e. of finite rank. Thus,  a similar analysis as above can be done, for pairs in $\c$ with the second coordinate of rank $l<\infty$.
\item
 We may summarize this information  as follows. Let
$$
\c_0=\{(P,Q)\in\p\times\p: r(P)<\infty \hbox{ or } r(Q)<\infty\}.
$$
Then the connected components of $\c_0$ are the sets
$$
\c_{k, l}^{m,n}=\{(P,Q)\in\p\times \p: r(P)=k, n(P)=l, r(Q)=m, n(Q)=n\},
$$
with $\min\{k,l,m,n\}$ finite.
\end{enumerate}
\end{rem}
In what follows, unless otherwise stated, we shall suppose that both projections in the pair $(P,Q)\in\c$ have infinite rank and nullity.
To describe the pairs $(P,Q)\in\c$, it will be useful to consider the homomorphism onto the Calkin algebra
$$
\pi :\b(\h)\to \b(\h) / \k(\h).
$$
Put $p=\pi(P)$, $q=\pi(Q)$, which are non zero projections in $\b(\h) / \k(\h)$. Write the matrix of $q$ in terms of $p$. Then
$$
q=\left( \begin{array}{cc} 0 & 0 \\  0 & q' \end{array} \right),
$$
where $q'$ is a projection (i.e. a selfadjoint idempotent) in $\b\left(N(P)\right) / \k\left(N(P)\right)$, the Calkin algebra of $N(P)$. In the Calkin algebra there are three (unitary) equivalence classes of projections:  $0$, $1$ and $e\ne 0,1$ ($e=\pi(E)$ for any $E$ with $R(E)$ and $N(E)$ infinite dimensional). 
\begin{defi}\label{clasesP}
Fix $P\in\p(\h)$. Denote 
$$
\c(P)=\{Q\in\p(\h): PQ \hbox{ is compact}\}.
$$
According to the above classification, relative to $P$ there are two classes of projections $Q$ such that $PQ$ is compact.
\begin{enumerate}
\item If $q'=1$:
$$
\c_1(P)=\{Q\in\p(\h): \pi\left((1-P)(1-Q)(1-P)\right)=\pi(1-P)\}.
$$
This means that $\dim\left(N(b)\right)<\infty$. We shall describe this class below. It is the restricted Grassmannian induced by the decomposition $\h=N(P)\oplus R(P)$ (in the usual description of the restricted Grassmannian: $N(P)$ plays the main role).
\item If $q'$ is a proper projection in $\b(N(P))/\k(N(P))$:
$$
\c_\infty(P)=\{Q\in\p(\h): \pi((1-P)(1-Q)(1-P))\ne \pi(1-P), 0\}.
$$ 
We  shall call this  the  class of {\it essential} projections  relative to $P$. We shall see that  the pairs in Example \ref{el ejemplo} belong to this class. 
\end{enumerate}
\end{defi}
\section{The Halmos decomposition}

Given   orthogonal projections $P$ and $Q$ , we shall call the {\it Halmos decomposition } \cite{halmos}  (though it was certainly used before) the following orthogonal decomposition of $\h$,
$$ 
\h_{11}=R(P)\cap R(Q) \ , \ \ \h_{00}=N(P)\cap N(Q) \ , \ \h_{10}=R(P)\cap N(Q) \ , \ \ \h_{01}=N(P)\cap R(Q)
$$
and $\h_0$ the orthogonal complement of the sum of the above. This last subspace is usually called the {\it generic part} of the pair $P, Q$. Note also that
$$
N(P-Q)=\h_{11}\oplus\h_{00}\ , \ \ N(P-Q-1)=\h_{10} \ \hbox{ and } N(P-Q+1)=\h_{01},
$$
so that the generic part depends in fact of the difference $P-Q$. 

Halmos proved that there is an isometric isomorphism between $\h_0$ and a product Hilbert space $\l\times\l$ such that,  in the above decomposition (putting $\l\times\l$ in place of $\h_0$), the projections are 
$$
P=1\oplus 0 \oplus 1 \oplus 0 \oplus \left(\begin{array}{cc} 1 & 0 \\ 0 & 0 \end{array} \right)
$$
and 
$$
Q=1\oplus 0 \oplus 0 \oplus 1 \oplus \left(\begin{array}{cc} C^2 & CS \\ CS & S^2 \end{array} \right),
$$
where $C=\cos(X)$ and $S=\sin(X)$ for some operator $0\le X\le \pi/2$ in $\l$ with trivial nullspace.

Let us describe the pairs in $\c$ in terms of this decomposition. It should be noted that  the operator $X$ and the space $\l$ are uniquely determined up to unitary equivalence.
\begin{prop}
The pair $(P,Q)$ belongs to $\c$ if and only if  $\h_{11}$ is finite dimensional and $C=\cos(X)$ is compact.
\end{prop}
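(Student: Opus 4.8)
The plan is to use that the Halmos decomposition $\h=\h_{11}\oplus\h_{00}\oplus\h_{10}\oplus\h_{01}\oplus\h_0$ reduces both $P$ and $Q$, hence also reduces $PQ$. Since this is a finite orthogonal sum, $PQ$ is compact if and only if each of its five diagonal blocks is compact, and I would simply read these blocks off the explicit matrix forms recalled above. On $\h_{00}$, $\h_{10}$ and $\h_{01}$ one of the two projections is $0$ and the other is $1$, so $PQ$ vanishes there; these summands never obstruct compactness. On $\h_{11}$ both projections equal the identity, so $PQ$ restricts to $1_{\h_{11}}$, which is compact exactly when $\h_{11}$ is finite dimensional.

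On the generic part, identifying $\h_0$ with $\l\times\l$ and using $P=\mathrm{diag}(1,0)$ and $Q=\left(\begin{array}{cc} C^2 & CS \\ CS & S^2\end{array}\right)$, one gets
$$
PQ|_{\h_0}=\left(\begin{array}{cc} C^2 & CS \\ 0 & 0\end{array}\right),\qquad PQP|_{\h_0}=\left(\begin{array}{cc} C^2 & 0 \\ 0 & 0\end{array}\right),
$$
so this block is compact if and only if $C^2$ is compact (then $CS$ is automatically compact, $S$ being bounded). Finally, $C^2$ compact is equivalent to $C$ compact: one direction holds because $\k(\l)$ is an ideal, and for the converse recall $C=\cos(X)\ge 0$, so $C=(C^2)^{1/2}$; since $t\mapsto t^{1/2}$ is continuous on $[0,\infty)$ with $0\mapsto 0$, the spectral theorem shows that the nonzero spectrum of $C$ consists of eigenvalues of finite multiplicity accumulating at most at $0$, i.e.\ $C$ is compact. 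Assembling the blocks then gives that $PQ$ is compact iff $\h_{11}$ is finite dimensional and $C$ is compact.

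There is no real obstacle here: the argument reduces to inspecting five explicit $2\times2$ (or scalar) blocks and invoking the elementary fact that the positive square root of a positive compact operator is compact. The only point needing a little care is the bookkeeping that isolates precisely the $\h_{11}$ and $\h_0$ blocks as the carriers of the two stated conditions.
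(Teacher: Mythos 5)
Your proof is correct and follows essentially the same route as the paper: both compute the block decomposition of $PQ$ induced by the Halmos decomposition, observe that the $\h_{11}$ block is the identity and the generic block is $\left(\begin{array}{cc} C^2 & CS \\ 0 & 0\end{array}\right)$, and deduce compactness of $C$ from compactness of the $1,1$ entry via the fact that the positive square root of a positive compact operator is compact. The only difference is that you spell out the vanishing of the three trivial blocks explicitly, which the paper leaves implicit.
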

\begin{proof}
By direct computation,
$$
PQ=1\oplus 0 \oplus 0 \oplus 0 \oplus \left(\begin{array}{cc} C^2 & CS \\ 0 & 0 \end{array} \right).
$$
If $C$ is compact, then $C^2$ and $CS$ are compact. If,  additionally,  $\dim\h_{11}<\infty$, then it is clear that $PQ$ is compact. 

Conversely, if $PQ$ is compact, then clearly  $\dim\h_{11}<\infty$. If the matrix operator
$$ 
\left(\begin{array}{cc} C^2 & CS \\ 0 & 0 \end{array} \right)
$$
is compact, then its $1,1$ entry is also compact. The square root of a positive compact operator is compact (recall that $C\ge 0$), thus $C$ is compact.
\end{proof}
\begin{rem}
If $(P,Q)\in\c$, then the spectral resolution of $X$ can be easily described. Since $0<\cos(X)$ is compact, it follows that
$$
X=\sum_n \gamma_n P_n+\frac{\pi}{2}E,
$$
where $0<\gamma_n< \pi/2$ is an increasing (finite or infinite) sequence, and  $P_n$, $E$ are the projections onto the eigenspaces associated to $\gamma_n$ and and $\pi/2$, respectively . For all $n$, $\dim R(P_n)<\infty$, and
$$
R(E)\oplus\left(\oplus_{n\ge 1} R(P_n)\right)=\l.
$$

\end{rem}
From the spectral picture of $X$ above, one obtains the following result, which states that in their generic part, all pairs in $\c$ are obtained as in Example \ref{ejemploClaseE}
\begin{prop}
The pair $(P,Q)$ belongs to $\c$ if and only if  the following conditions are satisfied:
\begin{enumerate}
\item
$\h_{11}$ is finite dimensional.
\item
The subspaces  $M=P(\h_0)$ and $N=Q(\h_0)$  of the generic part $\h_0$, satisfy
$$
M\oplus N=\h_0.
$$
\item
The idempotent $E=P_{M\parallel N}$ corresponding with this decomposition has matrix form, in terms of its range $M$,
$$
E=\left(\begin{array}{cc} 1 & K \\ 0 & 0 \end{array} \right)
$$ 
for $K:N\to M$ a compact operator.
\end{enumerate}
\end{prop}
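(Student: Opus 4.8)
The plan is to reduce the whole statement to the Halmos model together with the previous proposition, which already identifies membership in $\c$ with the two requirements ``$\dim\h_{11}<\infty$'' and ``$C=\cos X$ is compact''. Since condition (1) is literally the first of these, everything left to prove is that, in the Halmos picture with $\h_0$ identified with $\l\times\l$, compactness of $C$ is equivalent to (2) and (3) taken together.

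First I would make $M$ and $N$ explicit in these coordinates. From $P|_{\h_0}=\left(\begin{array}{cc}1&0\\0&0\end{array}\right)$ one gets $M=P(\h_0)=\l\times\{0\}$, with $M^\perp\cap\h_0=\{0\}\times\l$; and from $Q|_{\h_0}=\left(\begin{array}{cc}C^2&CS\\CS&S^2\end{array}\right)$, using $C^2+S^2=1$, a one-line computation gives $N=Q(\h_0)=\{(C\mu,S\mu):\mu\in\l\}$, the (closed) range of the isometry $\left(\begin{array}{c}C\\S\end{array}\right):\l\to\l\times\l$. From these formulas one reads off, using that $X$ and hence $S=\sin X$ has trivial nullspace, that $M\cap N=\{0\}$ always, that $M+N=\l\times R(S)$, and hence that $\overline{M+N}=\h_0$ always, while $M+N=\h_0$ holds exactly when $R(S)=\l$, i.e. exactly when $S$ is invertible. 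So (2) $\iff$ $S$ is invertible.

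Next, assuming $S$ invertible, I would compute the idempotent $E=P_{M\parallel N}$. The identity $(\eta,\zeta)=(\eta-CS^{-1}\zeta,\,0)+(CS^{-1}\zeta,\,\zeta)$ exhibits the unique splitting of a vector of $\h_0$ into its $M$- and $N$-components, so in the orthogonal decomposition $\h_0=M\oplus M^\perp$ one has $E=\left(\begin{array}{cc}1&-CS^{-1}\\0&0\end{array}\right)$; thus the off-diagonal operator $K$ of condition (3) is exactly $-CS^{-1}$ (viewed on $M^\perp$, which is canonically identified with $N$ since $M\oplus N=\h_0$). Because $S^{-1}$ is bounded, $K=-CS^{-1}$ is compact if and only if $C$ is compact. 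This settles both implications: if $C$ is compact then $K$ is compact, so (3) holds; and if (2) and (3) hold then $S$ is invertible and $K$ compact, whence $C=-KS$ is compact.

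It remains to check that (2) is not an extra hypothesis but is automatic once $C$ is compact, so that ``$C$ compact'' really does imply (2) as well as (3); this is the one point that is not mere bookkeeping in the $2\times2$ Halmos matrices. If $\l$ is infinite-dimensional and $C=\cos X$ is compact, then $1\notin\sigma(C)$: otherwise $1$ would be an isolated eigenvalue of $C$, yielding a nonzero $v$ with $\cos(X)v=v$, hence $Xv=0$, contradicting $N(X)=\{0\}$. By the spectral mapping theorem $0\notin\sigma(X)$, so $\sigma(S)=\sin(\sigma(X))$ is bounded away from $0$ and $S$ is invertible; the case $\dim\l<\infty$ is trivial since then $N(X)=\{0\}$ already forces $X$, hence $S$, invertible. (Alternatively, one may quote the spectral description of $X$ in the Remark after the previous proposition.) Assembling the pieces, $(P,Q)\in\c$ $\iff$ $\dim\h_{11}<\infty$ and $C$ compact $\iff$ (1), (2) and (3). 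The only real obstacle is thus this last spectral observation; the rest is the explicit description of $M$, $N$ and of the idempotent $E=P_{M\parallel N}$ in Halmos coordinates.
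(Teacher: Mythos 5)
Your proof is correct and follows essentially the same route as the paper: reduce via the preceding proposition to the condition ``$\dim\h_{11}<\infty$ and $C=\cos X$ compact,'' then work in the Halmos model to identify $E=P_{M\parallel N}$ with $\left(\begin{array}{cc}1&-CS^{-1}\\0&0\end{array}\right)$ and observe that $-CS^{-1}$ is compact iff $C$ is. The only differences are cosmetic: you verify the direct-sum decomposition and the formula for $E$ by hand where the paper invokes Buckholtz's criterion $M\oplus N=\h_0\iff P_0-Q_0$ invertible, and you spell out the spectral argument for the invertibility of $S$ (and the converse implication $C=-KS$), which the paper leaves to the Remark preceding the proposition and to the phrase ``it is clear.''
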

\begin{proof}
It is clear that these conditions imply that  $(P,Q)\in\c$. It is also clear  that  condition $\dim(\h_{11})<\infty$ is necessary. Denote by $P_0$ and $Q_0$ the reductions of $P$ and $Q$ to their generic part. In Halmos' model $\h_0=\l\times\l$,  $\sin(X)$ is invertible in $\l$. Then (reasoning with matrices in terms of $\h_0=\l\times \l$)
$$
P_0-Q_0=\left(\begin{array}{cc} S^2 & CS \\ CS & -S^2 \end{array} \right) \ \hbox{ and thus } \  (P_0-Q_0)^2=\left(\begin{array}{cc} S^2 & 0 \\ 0 & S^2 \end{array} \right)
$$ is invertible. Then $P_0-Q_0$ is invertible. This means that $M\oplus N=\h_0$ (see \cite{buckholtz}). Moreover, by a formula in \cite{buckholtz}, and after straightforward computations, it holds that
$$
E=P_{M\parallel N}=P_0(P_0-Q_0)^{-1}=\left(\begin{array}{cc} 1 & -CS^{-1} \\ 0 & 0 \end{array} \right).
$$
Note that $-CS^{-1}$ is compact in $\l$. 
\end{proof}
We shall describe the different subclasses of $\c$ in terms of the Halmos decomposition and the spectral resolution of $X$. The  class $\c_0$ is easiest to describe. Recall that $(P,Q)\in\c_0$ if $\dim R(Q)$ or $\dim R(P)<\infty$. 
\begin{prop}
Let $(P,Q)\in\c$. Then $(P,Q)\in\c_0$ if  and only if the sequence $\{\gamma_n\}$ is finite, $\dim \h_{01}<\infty$ and $\dim R(E_1)<\infty$. 
\end{prop}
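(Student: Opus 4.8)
The plan is to read the ranges $R(P)$ and $R(Q)$ off the Halmos model and translate finiteness of $r(P)$ and $r(Q)$ into conditions on the blocks of the decomposition. With $\h=\h_{11}\oplus\h_{00}\oplus\h_{10}\oplus\h_{01}\oplus\h_0$ and $\h_0\cong\l\times\l$ as above, we have $R(P)=\h_{11}\oplus\h_{10}\oplus P(\h_0)$ and $R(Q)=\h_{11}\oplus\h_{01}\oplus Q(\h_0)$. The first observation I would record is that $P|_{\h_0}$ and $Q|_{\h_0}$ are unitarily equivalent \emph{inside} $\h_0$: writing $C=\cos X$ and $S=\sin X$, the unitary $\left(\begin{array}{cc} C & -S \\ S & C \end{array}\right)$ carries $\left(\begin{array}{cc} 1 & 0 \\ 0 & 0 \end{array}\right)$ to $\left(\begin{array}{cc} C^2 & CS \\ CS & S^2 \end{array}\right)$, so $\dim P(\h_0)=\dim Q(\h_0)=\dim\l$. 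Hence
$$
r(P)=\dim\h_{11}+\dim\h_{10}+\dim\l, \qquad r(Q)=\dim\h_{11}+\dim\h_{01}+\dim\l .
$$

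The second step is to recognise $\dim\l$ and $\dim\h_{11}$ in the data of the statement. Since $\h_0$ is the \emph{generic} part, a short computation with the matrix of $Q|_{\h_0}$ shows that $R(P)\cap N(Q)$ and $N(P)\cap N(Q)$ meet $\h_0$ exactly in $N(C)\times 0$ and $N(S)\times 0$; being zero in the generic part, this forces $N(X)=0$ and $N(C)=0$, i.e. $\pi/2$ is not an eigenvalue of $X$. Consequently the eigenvectors of $X$ coming from the resolution $X=\sum_n\gamma_n P_n$ span $\l$, each $R(P_n)$ is finite dimensional, and therefore $\dim\l<\infty$ if and only if the sequence $\{\gamma_n\}$ is finite. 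For $\h_{11}$, a direct computation of the compression gives $a=PQP|_{R(P)}=1_{\h_{11}}\oplus 0_{\h_{10}}\oplus C^2$; since $\cos^2\gamma_n<1$ for every $n$ and $N(S)=0$, the eigenvalue $1$ of $a$ is carried precisely by $\h_{11}$, so $E_1=P_{N(a-1_{R(P)})}=P_{\h_{11}}$ and $\dim R(E_1)=\dim\h_{11}$, which is finite for every pair in $\c$.

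It then remains to put the pieces together. If $(P,Q)\in\c_0$, then $r(P)<\infty$ or $r(Q)<\infty$, and by the two displayed formulas each possibility forces $\dim\l<\infty$ and $\dim\h_{11}<\infty$ — that is, $\{\gamma_n\}$ finite and $\dim R(E_1)<\infty$ — together with finiteness of $\dim\h_{10}$ (resp.\ $\dim\h_{01}$); in particular, when $r(Q)<\infty$ one has $\dim\h_{01}\le r(Q)<\infty$, the case $r(P)<\infty$ being entirely symmetric with $\h_{10}$ in place of $\h_{01}$. Conversely, $\{\gamma_n\}$ finite and $\dim R(E_1)<\infty$ give $\dim\l<\infty$ and $\dim\h_{11}<\infty$, whence $\dim\h_{01}<\infty$ yields $r(Q)=\dim\h_{11}+\dim\h_{01}+\dim\l<\infty$ and so $(P,Q)\in\c_0$. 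The step I expect to require the most care is the first one: assigning each Halmos block correctly to $R(P)$ or to $R(Q)$, and in particular noticing that the generic part feeds the \emph{same} dimension $\dim\l$ into both ranges. Once this is settled, the rest is the elementary fact that a finite orthogonal sum of closed subspaces is finite dimensional exactly when each summand is.
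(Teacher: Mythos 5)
The paper states this proposition without proof, so there is nothing to compare your argument against; it has to stand on its own. Most of it does. The identifications $R(P)=\h_{11}\oplus\h_{10}\oplus P(\h_0)$ and $R(Q)=\h_{11}\oplus\h_{01}\oplus Q(\h_0)$, the rotation unitary built from $C$ and $S$ showing $\dim Q(\h_0)=\dim P(\h_0)=\dim\l$, the observation that genericity of $\h_0$ forces $N(C)=N(S)=\{0\}$ (so the $\pi/2$-eigenspace of $X$ is trivial, $\l=\oplus_n R(P_n)$, and each $R(P_n)$ is finite dimensional because $C$ is compact), and the identification $E_1=P_{\h_{11}}$ are all correct; so are the resulting rank formulas. (Minor slip: $N(P)\cap N(Q)$ meets $\h_0$ in $0\times N(S)$, not $N(S)\times 0$.) What your computation actually shows is that the three conditions of the statement are precisely equivalent to $r(Q)<\infty$.

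That is also where the gap sits. In the forward direction you dispatch the case $r(P)<\infty$ as ``entirely symmetric with $\h_{10}$ in place of $\h_{01}$''. But the proposition demands $\dim\h_{01}<\infty$, not $\dim\h_{10}<\infty$, and finiteness of $r(P)$ only yields the latter. The implication genuinely fails there: take $P$ of rank one and $Q$ of infinite rank with $R(Q)\subset N(P)$. Then $PQ=0$, so $(P,Q)\in\c_0$ under the definition given in Section 2, yet $\h_{01}=N(P)\cap R(Q)=R(Q)$ is infinite dimensional (while $\{\gamma_n\}$ is empty and $E_1=0$). So either you invoke the paper's standing convention that the first projection has infinite rank and nullity --- under which $\c_0$ effectively reduces to $\{r(Q)<\infty\}$ and your argument is complete --- or the stated condition would have to read ``$\dim\h_{01}<\infty$ or $\dim\h_{10}<\infty$''. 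As written, your ``symmetric case'' sentence papers over exactly the point at which the literal statement breaks down; you should say explicitly which reading you are proving, and note in passing that the condition $\dim R(E_1)<\infty$ is automatic for every pair in $\c$, as you in fact observe.
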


\section{A spatial characterization}

In this section we briefly address the following question: let $\s$ and $\t$ be closed subspaces of $\h$, when is $P_\s P_\t$ compact?

\begin{teo}
$P_\s P_\t$ is compact if and only if there exist orthonormal bases $\{\xi_n: n\ge 1\}$ and  $\{\psi_n: n\ge 1\}$ of $\s$ and $\t$ respectively, such that $\langle\xi_n,\psi_k\rangle=0$ if $n\ne k$ and $\langle\xi_n, \psi_n\rangle\to 0$ {\rm(}$n\to \infty${\rm)}.
\end{teo}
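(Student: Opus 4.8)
The plan is to reduce the whole question to the compactness of a single operator between $\s$ and $\t$ and then to read off the bases from its singular value decomposition. Concretely, I would replace $P_\s P_\t$ by $V=P_\t|_\s\colon\s\to\t$; a one-line computation shows $V^*=P_\s|_\t\colon\t\to\s$. Since $P_\t P_\s$ annihilates $\s^\perp$ and agrees with $V$ on $\s$, it is compact exactly when $V$ is, and $P_\s P_\t=(P_\t P_\s)^*$; hence $P_\s P_\t$ is compact if and only if $V$ is a compact operator. It is also useful to record that $N(V)=\s\cap\t^\perp$ and, inside $\t$, $\overline{R(V)}^{\,\perp}=N(V^*)=\t\cap\s^\perp$.

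For the implication ``$P_\s P_\t$ compact $\Rightarrow$ bases exist'', I would take the singular value decomposition $Vv=\sum_n s_n\langle v,\xi_n\rangle\psi_n$, with $s_n>0$, $s_n\to 0$, $\{\xi_n\}$ an orthonormal basis of $N(V)^\perp=\s\ominus(\s\cap\t^\perp)$ and $\{\psi_n\}$ an orthonormal basis of $\overline{R(V)}=\t\ominus(\t\cap\s^\perp)$. Because $\psi_m\in\t$, one has $\langle\xi_n,\psi_m\rangle=\langle V\xi_n,\psi_m\rangle=s_n\delta_{nm}$, so these two families are diagonally paired with diagonal entries $s_n\to 0$. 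Then I would throw in an orthonormal basis of $\s\cap\t^\perp$ and one of $\t\cap\s^\perp$: a vector of the first is orthogonal to all of $\t$ and a vector of the second to all of $\s$, so the off-diagonal inner products stay $0$ and the diagonal ones involving these vectors are $0$. Re-labelling the whole collection as a single list $\{\xi_n\}$, $\{\psi_n\}$ (keeping each singular pair in a common slot and distributing the two ``corner'' families in the remaining slots) yields orthonormal bases of $\s$ and of $\t$ with $\langle\xi_n,\psi_k\rangle=0$ for $n\ne k$ and $\langle\xi_n,\psi_n\rangle\to 0$.

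For the converse, given such bases, set $c_n=\langle\xi_n,\psi_n\rangle$. The hypothesis $\langle\xi_k,\psi_n\rangle=0$ for $k\ne n$ gives $P_\s\psi_n=\sum_k\langle\psi_n,\xi_k\rangle\xi_k=\overline{c_n}\,\xi_n$, so for every $v\in\h$
$$
P_\s P_\t v=\sum_n\langle v,\psi_n\rangle\,P_\s\psi_n=\sum_n\overline{c_n}\,\langle v,\psi_n\rangle\,\xi_n .
$$
Since $\{\xi_n\}$ and $\{\psi_n\}$ are orthonormal and $c_n\to 0$, the partial sums converge to $P_\s P_\t$ in operator norm, whence $P_\s P_\t$, being a norm limit of finite rank operators, is compact.

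The step I expect to be the real obstacle is the bookkeeping in the forward direction: fitting the singular vectors together with orthonormal bases of $\s\cap\t^\perp$ and of $\t\cap\s^\perp$ into two sequences indexed by the \emph{same} set, while keeping the paired diagonal entries convergent to $0$. In the separable infinite–dimensional case this is a routine interleaving; in general one must allow the list to be finite, and since the construction above forces $\dim(\s\cap\t^\perp)=\dim(\t\cap\s^\perp)$, the tidy ``common index set'' formulation should be read with that understanding. All the rest is standard Hilbert space operator theory.
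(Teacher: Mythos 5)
Your argument is correct in substance but reaches the key point by a genuinely different route. The paper keeps the operator $T=P_\s P_\t$ on all of $\h$, takes its singular value decomposition $T=\sum_n s_n\,\xi_n\otimes\psi_n$, and derives the bi-orthogonality $\langle\xi_n,\psi_k\rangle=s_n\delta_{nk}$ from the Crimmins-type identity $T^2=TT^*T$ (valid for any product of two orthogonal projections), via a computation with double sums. You instead pass to the corner operator $V=P_\t|_\s\colon\s\to\t$ and read the bi-orthogonality directly off the singular value decomposition: since $V\xi_n=P_\t\xi_n=s_n\psi_n$ and $\psi_m\in\t$, one gets $\langle\xi_n,\psi_m\rangle=\langle P_\t\xi_n,\psi_m\rangle=s_n\delta_{nm}$ with no further identity needed. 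This is shorter and arguably more transparent; it also identifies the complements $N(V)=\s\cap\t^\perp$ and $\overline{R(V)}^{\perp}\cap\t=\t\cap\s^\perp$ explicitly, where the paper works with $\overline{R(T)}$ and $N(T)^\perp$ and checks separately that the leftover vectors are mutually orthogonal. What the paper's route buys is the observation, recorded as a remark, that the equation $T^2=TT^*T$ characterizes products of two projections in general; your route buys economy. The converse direction is the same computation in both.

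One concrete error in your closing remark: the construction does \emph{not} force $\dim(\s\cap\t^\perp)=\dim(\t\cap\s^\perp)$. Take $\s=\h$ and $\t$ one-dimensional: then $P_\s P_\t$ is compact, $\s\cap\t^\perp=\t^\perp$ is infinite-dimensional, and $\t\cap\s^\perp=\{0\}$. So the two extended bases need not admit a common index set, and the statement has to be read as imposing conditions only on those indices present in both lists (equivalently, the diagonal pairing is required only on the singular-vector part, all remaining inner products being $0$). This is a defect of the ``common index set'' formulation itself rather than of your argument --- the paper's proof extends the two bases ``arbitrarily'' without addressing it either --- but the equality of dimensions you assert is false.
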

\begin{proof}
The sufficiency of this condition is clear. If $\{\xi_n: n\ge 1\}$ and $\{\psi_n: n\ge 1\}$ are bi-orthogonal and $\langle\xi_n,\psi_n\rangle\to 0$, then
$$
P_\s P_\t=(\sum_{n\ge 1} \langle\ , \xi_n\rangle\xi_n)(\sum_{k\ge 1} \langle\ , \psi_k\rangle\psi_k)=\sum_{n\ge 1}\langle\xi_n,\psi_n\rangle\xi_n\otimes\psi_n.
$$
This is essentially the  singular value decomposition for $P_\s P_\t$. Indeed, put 
$$
\langle\xi_n,\psi_n\rangle=e^{i\theta_n}|\langle\xi_n,\psi_n\rangle|
$$ 
and replace $\xi'_n=e^{-i\theta_n}\xi_n$. Then
$$
P_\s P_\t=\sum_{n\ge 1}|\langle\xi'_n,\psi_n\rangle|\xi'_n\otimes\psi_n
$$
with singular values $|\langle\xi'_n,\psi_n\rangle|\to 0$, and thus $P_\s P_\t$ is compact.

Conversely, suppose that $T=P_\s P_\t$ is compact. 
Then clearly $\overline{R(T)}\subset\s$ and $N(P_\t)=\t^\perp\subset N(T)$, i.e. $N(T)^\perp\subset\t$. Thus $P_\s P_{\overline{R(T)}}=P_{\overline{R(T)}}$ and $P_\t P_{N(T)^\perp}=P_{N(T)^\perp}$.
Then
$$
T=P_{\overline{R(T)}}TP_{N(T)^\perp}=P_{\overline{R(T)}}P_\s P_\t P_\s P_{N(T)^\perp}=P_{\overline{R(T)}} P_{N(T)^\perp}.
$$

Consider the singular value decomposition of $T$:
$$
T=\sum_{n\ge 1} s_n \xi_n\otimes \psi_n,
$$
where $\{\xi_n: n\ge 1\}$ and $\{\psi_n: n\ge 1\}$ are orthonormal bases of $\overline{R(T)}$ and $N(T)^\perp$, respectively.
Note that $T$ satifies the equation 
\begin{equation}\label{crimmings}
T^2=P_\s P_\t P_\s P_\t=TT^*T.
\end{equation}
 Straightforward computations show that
$$
T^2=\sum_{n\ge 1} s_nT\xi_n \otimes\psi_n=\sum_{n,k\ge 1} s_ns_k\langle\xi_n,\psi_k\rangle\xi_k\otimes \psi_n
=\sum_{k\ge 1}\langle\ \ \  , \sum_{n\ge 1} s_ns_k\langle\psi_k,\xi_n\rangle\psi_n\rangle\xi_k.
$$
On the other hand,
$$
T^*T=\sum_{k\ge 1} s_k^2 \psi_k\otimes\psi_k
$$
and 
$$
TT^*T=\sum_{k\ge 1} s_k^2 T\psi_k\otimes \psi_k=\sum_{k\ge 1}s_k^3\xi_k\otimes \psi_k=\sum_{k\ge1} s_k^3\langle\ \ \ , \psi_k\rangle\xi_k.
$$
Therefore using (\ref{crimmings})
$$
\sum_{n\ge 1} s_ns_k\langle\psi_k,\xi_n\rangle\psi_n=s_k^3\psi_k.
$$
Then
$\langle\xi_n,\psi_k\rangle=0$ if $n\ne k$ and $\langle\xi_n,\psi_n\rangle=s_n$.

Let us extend  the orthonormal bases $\{\xi_n\}$ and $\{\psi_n\}$ of $\overline{R(T)}$ and $N(T)^\perp$ to orthonormal bases of $\s$ and $\t$. Note that if $\xi\in\s\ominus\overline{R(T)}$ and $\psi\in\t\ominus N(T)^\perp$, then
$$
\langle\xi,\psi\rangle=\langle P_\s\xi,P_\t\rangle=\langle\xi, P_\s P_\t\psi\rangle=\langle\xi,T\psi\rangle=0,
$$
because $\xi\perp R(T)$. 
Therefore we can extend the bases $\{\xi_n\}$ and $\{\psi_n\}$ to bases $\{\xi'_n\}$ and $\{\psi'_n\}$ arbitrarily, and the properties that $\langle\xi'_n,\psi'_k\rangle=0$ if $n\ne k$,  and  that $\langle\xi'_n,\psi'_n\rangle$ ($=s_n$ or $0$) converges to $0$ remain valid for the extended bases. 
\end{proof}
\begin{rem}
Equation (\ref{crimmings}) above in fact characterizes operators which are the product of two projections, compact or not. This was shown by Crimmins, and published  in \cite{crimm}.
\end{rem}

\begin{rem}
In the special case $\s=R(P_I)$ and $\t=R(Q_J)$ for $I,J\subset\mathbb{R}^3$ of finite Lebesgue measure, the bi-orthogonal system is given by the so-called {\it prolate spherical functions}, and their (normalized) images under $Q_J$ (see, for instance, \cite{bell labs}). That these functions are bi-orthogonal (or double orthogonal, as stated in \cite{bell labs}) is well known. As seen above, this is not a special feature of this example but a general property when $PQ$ is compact.
\end{rem}
\section{Unitary actions}

We shall use two unitary actions to describe the structure of $\c$. The full unitary group $\u(\h)$ acts on pairs in $\c$ by joint inner conjugation:
$$
U\cdot (P,Q)=(UPU^*,UQU^*), \ \ U\in\u(\h), \ \ (P,Q)\in\c.
$$ 
We shall make use of another local unitary action, on pairs in $\c$ with the first coordinate $P_0$ fixed.  Recall the definition  of the restricted unitary group, where the restriction is given by the decomposition $\h=R(P_0)\oplus N(P_0)$ (see \cite{pressleysegal}),
$$
\u_{res}(P_0)=\{U\in\u(\h): [U,P_0]\in\k(\h)\}.
$$
In matrix form, in terms of the given decomposition, these unitaries are of the form
$$
U=\left( \begin{array}{cc} u_{11} & u_{12} \\ u_{21} & u_{22} \end{array} \right)
$$
where $u_{12}$ and $u_{21}$ are compact operators. Elementary matrix computations, involving the  fact that $U$ is unitary, imply that $u_{11}$ and $u_{22}$ are Fredholm operators in $R(P_0)$ and $N(P_0)$, respectively, and that 
$$
ind (u_{22})=-ind (u_{11}).
$$
The integer $ind (u_{11})$ is usually called the {\it index} of $U$. It is known that this index parametrizes the connected components of $\u_{res}(P_0)$: two unitaries $U,W\in\u_{res}(P_0)$ belong to the same connected component if and only if $ind (U)= ind (W)$ (see for instance \cite{pressleysegal} or \cite{carey}).

Let us prove that $\u_{res}(P_0)$ acts by inner conjugation of the classes $\c_x(P_0)$ ($x=0,1,\infty$).
\begin{prop}
Let $Q\in\c_x(P_0)$ {\rm(}$x=0,1,\infty${\rm)} and $U\in \u_{res}(P_0)$. Then $UQU^*\in\c_x(P_0)$.
\end{prop}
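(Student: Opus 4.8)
The plan is to show that inner conjugation by $U\in\u_{res}(P_0)$ preserves two things: the property that the product with $P_0$ is compact (which is what $\c(P_0)$ means), and the membership in the specific subclass $\c_x(P_0)$ (which is a statement in the Calkin algebra). First I would check that $UQU^*\in\c(P_0)$. Since $Q\in\c(P_0)$, the operator $P_0Q$ is compact, hence so is $P_0QU^*$. Now write $P_0(UQU^*)=(P_0U)QU^*$. Writing $P_0U=UP_0+[P_0,U]$ and using that $[P_0,U]\in\k(\h)$ by definition of $\u_{res}(P_0)$, we get $P_0UQU^*=U(P_0Q)U^*+[P_0,U]QU^*$, and both summands are compact. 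So $UQU^*\in\c(P_0)$.

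Next I would pass to the Calkin algebra to identify which subclass $UQU^*$ lands in. Let $\pi$ be the quotient map, $p=\pi(P_0)$, $q=\pi(Q)$, and $w=\pi(U)$. Because $[P_0,U]\in\k(\h)$, we have $wp=pw$, i.e.\ $w$ commutes with $p$ in the Calkin algebra; also $w$ is unitary there. Therefore $\pi(UQU^*)=wqw^*$, and since $w$ commutes with $p$,
$$
(1-p)\pi\bigl(UQU^*\bigr)(1-p)=(1-p)wqw^*(1-p)=w\,(1-p)q(1-p)\,w^*.
$$
Now recall the definitions: $Q\in\c_1(P_0)$ means $\pi\bigl((1-P_0)(1-Q)(1-P_0)\bigr)=\pi(1-P_0)$, equivalently (since $(1-p)q(1-p)=(1-p)-(1-p)(1-q)(1-p)$) that $(1-p)q(1-p)=0$; and $Q\in\c_\infty(P_0)$ means $(1-p)q(1-p)$ is a nonzero, proper projection in $(1-p)\bigl(\b(\h)/\k(\h)\bigr)(1-p)\cong\b(N(P_0))/\k(N(P_0))$; and $Q\in\c_0(P_0)$ corresponds to $q=0$, i.e.\ $(1-p)q(1-p)=0$ together with $p q p=0$ — in any case, $q$ itself is unchanged up to the unitary $w$. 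Conjugating the (selfadjoint) element $(1-p)q(1-p)$ by the unitary $w$ that commutes with $1-p$ preserves being zero, being a proper nonzero projection, and being the full projection $1-p$ (the last because $w(1-p)w^*=1-p$). Hence $(1-p)\pi(UQU^*)(1-p)$ has exactly the same status as $(1-p)q(1-p)$, which is precisely the trichotomy defining $\c_0,\c_1,\c_\infty$. For the $\c_0$ case one should additionally remark, as in Remark~\ref{caso finito}, that conjugation by a unitary preserves finite rank of $P_0Q$ or of $Q$, so the argument there applies verbatim; alternatively one invokes that $r(UQU^*)=r(Q)$.

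The only genuinely delicate point is the identification $(1-p)\bigl(\b(\h)/\k(\h)\bigr)(1-p)\cong \b(N(P_0))/\k(N(P_0))$ and the verification that $w$ restricts to a unitary of this corner — but this is routine once we know $wp=pw$: the block $(1-p)w(1-p)=\pi(u_{22})$ is the image of the Fredholm operator $u_{22}$, hence unitary in the corner Calkin algebra (it has a compact-modulo parametrix because $U^*$ also lies in $\u_{res}(P_0)$). Thus the whole argument reduces to the two observations that $[P_0,U]$ compact forces $wp=pw$, and that a unitary commuting with $1-p$ fixes each of the three equivalence classes of the element $(1-p)q(1-p)$ in the corner. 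I expect the block-matrix bookkeeping in the Calkin algebra to be the main place where care is needed, but no serious obstacle arises.
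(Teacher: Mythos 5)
Your proof is correct, and it is both a little different from and a little more complete than the paper's. The paper's proof consists only of the first half of your argument, done by brute force: it multiplies out the three $2\times 2$ matrices and observes that the $1,1$ and $1,2$ entries of $UQU^*$ are sums of terms each containing one of the compact factors $a$, $x$, $u_{12}$ (or their adjoints), so that $P_0UQU^*\in\k(\h)$; your commutator identity $P_0UQU^*=U(P_0Q)U^*+[P_0,U]QU^*$ reaches the same conclusion without the bookkeeping. What the paper does \emph{not} do is justify the preservation of the subscript $x$ --- it stops at $UQU^*\in\c(P_0)$ --- whereas your passage to the Calkin algebra, using that $w=\pi(U)$ is a unitary commuting with $p$ so that conjugation by $w$ fixes each of the three possible statuses of the corner element $(1-p)q(1-p)$ (zero, proper nonzero projection, or all of $1-p$), supplies exactly the missing step; this is the right argument and it is what the authors presumably regard as implicit in the definition of the classes. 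One small caveat: your parenthetical identification of $\c_1(P_0)$ with the condition $(1-p)q(1-p)=0$ follows the displayed formula in Definition~\ref{clasesP} literally, but that formula is inconsistent with the surrounding text (``$q'=1$'', ``$\dim N(b)<\infty$'') and with the Corollary identifying $\c_1(P)$ with the restricted Grassmannian; the intended condition is $(1-p)q(1-p)=1-p$, i.e.\ the factor $(1-Q)$ in the display should be $Q$. This does not affect your proof, since your conjugation argument preserves every one of the three alternatives regardless of which one is attached to which label.
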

\begin{proof}
Straightforward matrix computation:
$$
UQU^*=\left( \begin{array}{cc} u_{11} & u_{12} \\ u_{21} & u_{22} \end{array} \right)\left( \begin{array}{cc} a & x \\ x^* & y \end{array} \right)\left( \begin{array}{cc} u_{11}^* & u_{21}^* \\ u_{12}^* & u_{22}^* \end{array} \right).
$$
The $1,1$ entry of this product is
$$
u_{11}au_{11}^*+u_{12}x^*u_{11}^*+u_{11}xu_{12}^*+u_{12}bu_{12}^*,
$$
where $a, x$ and $u_{12}$ are compact, therefore the $1,1$ entry is compact.
The $1,2$ entry is
$$
u_{11}au_{21}^*+u_{12}x^*u_{21}^*+u_{11}xu_{22}^*+u_{12}bu_{22}^*,
$$ 
which is compact by a similar argument. Then $P_0UQU^*\in\k(\h)$.
\end{proof}
We shall mainly use  $\u_{res}^0(P_0)$, the  connected component of the identity (or zero index component). This component is an exponential group, namely
$$
\u_{res}^0(P_0)=exp \{iX\in\b(\h): X^*=X, [X,P_0]\in\k(\h)\},
$$
(see \cite{carey}, \cite{pressleysegal}).
Note that $\u_{res}(P_0)$ is the unitary group of the C$^*$-algebra 
$\a_{P_0}(\h)$ of operators $T$ in $\h$ such that $[T,P_0]\in\k(\h)$.

\section{The restricted Grassmannian}

Let us recall some elementary facts concerning the restricted Grassmannian of a decomposition of $\h=\n_0\oplus\n_0^\perp$. Denote by $E_0$ the orthogonal projection onto $\n_0$.

\begin{defi} {\rm \cite{segalwilson}}

A projection $Q$ belongs to the restricted Grassmannian $\p_{res}(\n_0)$ with respect to the decomposition $\h=\n_0\oplus\n_0^\perp$, or more precisely, with respect to subspace $\n_0$, if and only if
\begin{enumerate}
\item
$$
E_0Q|_{R(Q)}:R(Q)\to \n_0\in\b\left(R(Q),\n_0\right)
$$
is a Fredholm operator in $\b(R(Q),\n_0)$, and
\item
$$
(1-E_0)Q|_{R(Q)}:R(Q)\to \n_0^\perp\in\b\left(R(Q),\n_0^\perp\right)
$$
is compact.
\end{enumerate}
\end{defi}
The index of the first operator characterizes the connected components of $\p_{res}(\n_0)$.
The following result is elementary.
\begin{lem}\label{lema sato}
Let $Q\in\p(\h)$ with matrix {\rm(}in terms of $\h=\n_0\oplus\n_0^\perp${\rm)}
$$
Q=\left( \begin{array}{cc} a & x\\ x^* & b \end{array} \right).
$$
Then $Q\in \p_{res}(\n_0)$ if and only if $a$ is Fredholm in $\b(\n_0)$, and $b$ and $x$ are compact.
\end{lem}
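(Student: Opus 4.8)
The plan is to unwind both conditions in Definition~\ref{clasesP} (or rather the restricted Grassmannian definition preceding the lemma) in terms of the matrix entries $a,x,b$ of $Q$, using the fact that on $R(Q)$ the operator $Q$ acts as the identity. Write $j:R(Q)\hookrightarrow\h$ for the inclusion; then $E_0Q|_{R(Q)}=E_0 j$ and $(1-E_0)Q|_{R(Q)}=(1-E_0)j$, and for $\eta\in R(Q)$ we have $Q\eta=\eta$, so the coordinates of $\eta$ in $\h=\n_0\oplus\n_0^\perp$ are exactly $(a\eta_1+x\eta_2,\ x^*\eta_1+b\eta_2)$ where $\eta=(\eta_1,\eta_2)$; but also $\eta=(\eta_1,\eta_2)$, so in fact I will work directly with the block operators rather than with $R(Q)$, exploiting $Q=Q^2$ and $Q=Q^*$.

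First I would prove the implication ``$Q\in\p_{res}(\n_0)\Rightarrow a$ Fredholm, $x,b$ compact.'' Condition (2) says $(1-E_0)Q|_{R(Q)}$ is compact; composing on the right with the (bounded) map $\h\to R(Q)$ given by $Q$ itself, one gets that $(1-E_0)QQ=(1-E_0)Q$ is compact as an operator on all of $\h$. Its matrix is $\left(\begin{smallmatrix} 0 & 0\\ x^* & b\end{smallmatrix}\right)$, so $x^*$ and $b$ are compact, hence $x$ and $b$ are compact. For the Fredholm statement, condition (1) gives that $E_0Q|_{R(Q)}:R(Q)\to\n_0$ is Fredholm; I would show that $a=E_0 Q E_0|_{\n_0}$ differs from a composition of this Fredholm operator with the projection-type map $\n_0\to R(Q)$ only by a compact perturbation. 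Concretely, $a = E_0 Q E_0$, and since $Q=QQ$ one has $E_0QE_0 = E_0 Q (Q + (1-Q)) E_0 = (E_0 Q)(Q E_0) + E_0 Q(1-Q)E_0$; the second term vanishes because $Q(1-Q)=0$, so $a=(E_0Q|_{R(Q)})(QE_0)$ where $QE_0:\n_0\to R(Q)$. Since $x=E_0Q(1-E_0)$ is compact, $QE_0$ and $(QE_0)^*=E_0Q|_{R(Q)}$ are ``almost inverse'' up to compacts (their composition the other way is $E_0QQ E_0 = E_0QE_0=a$, and $a - 1_{\n_0}= -x x^*$... wait, from \eqref{relaciones proyeccion}, $xx^*=a-a^2$, so $a(1-a)$ is compact, i.e. $a$ is a compact perturbation of a projection in $\b(\n_0)$); combining, $a$ is a product/perturbation of Fredholm operators and hence Fredholm. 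I would present this cleanly by noting $a(1-a)$ compact already forces $a$ to be Fredholm iff $0$ is isolated in $\sigma_{ess}(a)$, and then matching indices.

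For the converse, assume $a$ is Fredholm in $\b(\n_0)$ and $x,b$ compact. Then $(1-E_0)Q$ has matrix $\left(\begin{smallmatrix}0&0\\x^*&b\end{smallmatrix}\right)$, which is compact, so its restriction to $R(Q)$ is compact, giving condition (2). For condition (1), I would check that $E_0Q|_{R(Q)}:R(Q)\to\n_0$ is Fredholm: compose with $QE_0:\n_0\to R(Q)$ to get $a$ on $\n_0$ (Fredholm by hypothesis) and with $(QE_0)^*$-type maps the other way to get, modulo the compact $b$, a Fredholm operator on $R(Q)$; thus $E_0Q|_{R(Q)}$ has Fredholm parametrices on both sides and is Fredholm.

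The main obstacle is bookkeeping the identification ``operators on $R(Q)$'' versus ``operators on $\h$ compressed by $Q$'': one must be careful that $E_0Q|_{R(Q)}$ is genuinely Fredholm \emph{as a map between the two different Hilbert spaces} $R(Q)$ and $\n_0$, which is why the argument is phrased through the relations \eqref{relaciones proyeccion} ($xx^*=a-a^2$, $x^*x=b-b^2$) that let one replace the compression $QE_0$ by something whose composition with $E_0Q$ is an explicit compact perturbation of the identity on the relevant space. Once that dictionary is fixed the rest is a short compactness-and-Fredholm computation, so I expect this lemma to have a proof of only a few lines — the excerpt itself calls it ``elementary.''
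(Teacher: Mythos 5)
Your proposal is correct, and for the forward implication it is essentially the paper's argument: the key identity is $E_0Q\,(E_0Q|_{R(Q)})^{*}=E_0QE_0|_{\n_0}=a$, so $a$ is Fredholm as a Fredholm operator times its adjoint, and compactness of $(1-E_0)Q$ (whose matrix has first row $0$ and second row $(x^{*},\,b)$) yields $x$ and $b$ compact --- the paper extracts this via $(1-E_0)Q\bigl((1-E_0)Q\bigr)^{*}=b$ together with $x^{*}x=b-b^{2}$, while you read it off the matrix after composing with $Q$; both work. The genuine difference is in the converse. The paper deduces from $a=(E_0Q)(E_0Q)^{*}$ Fredholm that $E_0Q|_{R(Q)}$ has closed range of finite codimension, and then shows $N(E_0Q|_{R(Q)})$ is finite dimensional by an explicit computation with the block equations: $\xi=Q\xi$ with $E_0\xi=0$ forces $\xi_{+}=0$ and $\xi_{-}=b\xi_{-}$, so the kernel sits inside the $1$-eigenspace of the compact operator $b$. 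You instead produce two-sided Atkinson parametrices: $(E_0Q|_{R(Q)})(QE_0)=a$ is Fredholm on $\n_0$, while $(QE_0)(E_0Q|_{R(Q)})=QE_0Q|_{R(Q)}=1_{R(Q)}-Q(1-E_0)Q|_{R(Q)}$ is the identity plus a compact operator on $R(Q)$; since both compositions are Fredholm, so is $E_0Q|_{R(Q)}$. That is a valid and arguably cleaner route, as it avoids the kernel computation entirely and keeps everything at the level of invertibility modulo compacts. One caveat: your parenthetical alternative, that ``$a(1-a)$ compact forces $a$ to be Fredholm iff $0$ is isolated in $\sigma_{ess}(a)$,'' is not right as stated --- $a(1-a)$ compact gives $\sigma_{ess}(a)\subseteq\{0,1\}$, so every essential spectral point is automatically isolated; the correct criterion is $0\notin\sigma_{ess}(a)$, and ``matching indices'' plays no role. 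But that aside is dispensable: your main line of argument already closes the proof.
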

\begin{proof}
The proof is based on the following elementary facts:
\begin{itemize}
\item
$A\in\b(\h_1,\h_2)$ is a Fredholm operator if and only if   $AA^*$ is a Fredholm operator in $\h_1$ and $N(A)$ is finite dimensional.
\item
$A\in\b(\h_1,\h_2)$ is compact if and only if $A^*A\in\b(\h_1)$ is compact.
\end{itemize}
Suppose first that $Q\in\p_{res}(\n_0)$. Then $E_0Q\in\b\left(R(Q),\n_0\right)$ is Fredholm,  and thus
$$
E_0Q(E_0Q)^*|_{\n_0}=E_0QE_0|_{\n_0}=a
$$
is Fredholm in $\n_0$. Also $(1-E_0)Q\in\b(R(Q),\n_0^\perp)$ is compact, and thus
$$
(1-E_0)Q(1-E_0-Q)^*|_{\n_0^\perp}=(1-E_0)Q(1-E_0)|_{\n_0^\perp}=b
$$
is compact in $\n_0^\perp$. The fact that $Q$ is a projection implies the relation
$b-b^2=x^*x$, and thus $x$ is compact.

Conversely, by the last computations, if $x$ and $b$ are compact,
then $(1-E_0)Q\in\b\left(R(Q),\n_0^\perp\right)$ is compact.
Similarly, $E_0Q(E_0Q)^*|_{\n_0}=a$ is Fredholm,
thus $E_0Q$, as an operator in $\b(R(Q),\n_0)$,
has closed range (equal to the range of $a$) with finite codimension.
Let us prove that its nullspace is finite dimensional.
Let $\xi=\xi_++\xi_-=Q\xi$ such that $E_0\xi=0$, ($\xi_+\in\n_0$, $\xi_-\in\n_0^\perp$).
This implies that
$$
\left\{ \begin{array}{l} \xi_+ =a \xi_++x \xi_- \\  \xi_-=x^* \xi_++b \xi_- \end{array} \right.
$$
and $\xi_+=0$.
The second equation then reduces to $\xi_-=b\xi_-$, i.e., $\xi_-$ lies in the $1$-eigenspace of the compact operator $b$. Thus $\xi_-$ lies in a finite dimensional space. It follows that $N(E_0Q|_{R(Q)})$ is finite dimensional.
\end{proof}
\begin{coro}
Let $P\in\p(\h)$ be such that  $N(P),R(P)$ are  infinite dimensional. Then $\c_1(P)$ coincides with the restricted Grassmannian of $\h$ induced by the decomposition $\h=N(P)\oplus R(P)$.
\end{coro}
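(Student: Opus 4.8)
The plan is to read this off directly from Lemma~\ref{lema sato}. Write the matrix of $Q$ in terms of $P$, as in Section~2,
\[
Q=\left(\begin{array}{cc} a & x \\ x^* & b \end{array}\right),\qquad a\in\b(R(P)),\ \ b\in\b(N(P)),\ \ x\in\b(N(P),R(P)),
\]
and recall that $PQ$ is compact, i.e.\ $Q\in\c(P)$, precisely when $a$ and $x$ are compact; in that case $b\ge 0$ and $b-b^2=x^*x$ is compact, so $\pi(b)$ is automatically a projection in the Calkin algebra of $N(P)$.

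Next I would fix the conventions: the restricted Grassmannian in question is $\p_{res}(N(P))$ for the polarization $\h=N(P)\oplus R(P)$, that is, $\n_0=N(P)$ and $\n_0^\perp=R(P)$ in the notation of Lemma~\ref{lema sato}. Writing the matrix of $Q$ with $N(P)$ in the upper-left corner just transposes the blocks above: the $\n_0$-corner is $b$, the $\n_0^\perp$-corner is $a$, and the off-diagonal entry is $x^*$. Lemma~\ref{lema sato} then asserts that $Q\in\p_{res}(N(P))$ if and only if $b$ is Fredholm in $\b(N(P))$ and both $a$ and $x$ are compact.

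Finally I would match this with the description of $\c_1(P)$ in Definition~\ref{clasesP}. The requirement that $a$ and $x$ be compact is exactly the requirement $Q\in\c(P)$; granting it, the condition ``$b$ Fredholm in $\b(N(P))$'' is equivalent to the defining relation $q'=\pi(b)=1_{N(P)}$ of $\c_1(P)$. Indeed, by Atkinson's theorem $b$ is Fredholm if and only if $\pi(b)$ is invertible in the Calkin algebra of $N(P)$; since $\pi(b)$ is a projection and the unit is the only invertible projection in a unital algebra, $b$ Fredholm forces $\pi(b)=1_{N(P)}$, while conversely $\pi(b)=1_{N(P)}$, i.e.\ $1_{N(P)}-b$ compact, makes $b$ a compact perturbation of the identity and hence Fredholm. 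Combining the two descriptions, $Q\in\p_{res}(N(P))$ if and only if $Q\in\c_1(P)$, which is the assertion.

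There is no genuine difficulty here; this is a bookkeeping corollary of Lemma~\ref{lema sato}. The two points that deserve a line of care are, first, keeping straight which subspace occupies the upper-left corner when passing between the ``matrix in terms of $P$'' of Section~2 and the ``matrix in terms of $\n_0$'' of Lemma~\ref{lema sato}; and second, the elementary observation that a Calkin-algebra projection is Fredholm, as an operator, only if it equals the identity --- it is this that collapses the a priori weaker condition $\dim N(b)<\infty$ to $1_{N(P)}-b$ compact. The standing assumption that $N(P)$ and $R(P)$ are both infinite dimensional is what makes $\p_{res}(N(P))$ the restricted Grassmannian of an honest polarization.
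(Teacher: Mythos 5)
Your proposal is correct and follows essentially the same route as the paper: read off the matrix description of $\c_1(P)$ from Definition~2.4 and match it against Lemma~6.2 after transposing the blocks to put $N(P)$ in the $\n_0$-corner. The only detail you add beyond what the paper writes is the explicit Atkinson-type observation that, since $\pi(b)$ is a projection in the Calkin algebra of $N(P)$, Fredholmness of $b$ is equivalent to $\pi(b)$ being the identity; the paper leaves this implicit, and your spelling it out is harmless and accurate.
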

\begin{proof}
In the description of the classes $\c_i(P)$ at Definition \ref{clasesP} (given in matrix form in terms of the decomposition $\h=R(P)\oplus N(P)$, note the reversed order),  a projection $Q$ belongs to $\c_1(P)$ if and only if, in the Calkin algebra, its $2,2$ entry is the identity  and all other entries are nil. By the above Lemma, this means that $Q$ belongs to the restricted Grassmannian of the decomposition $\h=N(P)\oplus R(P)$.
\end{proof}
From now on we shall refer  this set of projections  as the restricted Grassmannian of $N(P_0)$. 
\begin{rem} 
The group $\u_{res}^0(P_0)$ acts transitively on the connected components of $\c_1(P_0)$, which are parametrized by the Fredholm index defined in the restricted Grassmannian of $N(P_0)$.
\end{rem}
Let us denote by
$$
\c_1=\{(P,Q)\in\c: Q\in\c_1(P)\},
$$
the union of $\c_1(P)$ for all $P\in\p_{\infty}(\h)$,
where $\p_{\infty}(\h)$ denotes the (connected) space of projections in $\h$ with infinite dimensional range and nullspace.
\begin{teo} 
The connected components of $\c_1$ are parametrized by the Fredholm index. Namely, $(P,Q), (P',Q')\in\c_1$ lie in the same connected component if  and only if 
the index of $Q$ in the restricted Grassmannian of $N(P)$ coincides with the index of $Q'$ in the restricted Grassmannian of $N(P')$.
\end{teo}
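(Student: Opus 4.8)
The plan is to show that $\iota(P,Q)$, the index of $Q$ in the restricted Grassmannian of $N(P)$ — i.e. the Fredholm index of $(1-P)Q|_{R(Q)}:R(Q)\to N(P)$ — parametrizes the components. Recall that $\c_1(P)$ \emph{is} this restricted Grassmannian (the Corollary to Lemma \ref{lema sato}), and that, by Lemma \ref{lema sato}, $Q\in\c_1(P)$ is the same as $Q-(1-P)\in\k(\h)$. Two remarks are used throughout. First, $\iota$ is a unitary invariant: for $U\in\u(\h)$, conjugation by $U$ turns $(1-P)Q|_{R(Q)}$ into a unitarily equivalent operator from $UR(Q)$ to $UN(P)$, so $\iota(UPU^*,UQU^*)=\iota(P,Q)$. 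Second, written as $Q-(1-P)\in\k(\h)$, membership in $\c_1(P)$ is manifestly stable under conjugations carrying $N(P)$ to the nullspace of the new first coordinate. The two implications are treated separately, each reduced to the known structure of one fixed restricted Grassmannian.

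\emph{Sufficiency.} Assume $\iota(P,Q)=\iota(P',Q')$. As $P,P'$ have infinite rank and nullity, $UPU^*=P'$ for some unitary $U=e^{iX}$, $X=X^*$ (spectral theorem). The curve $t\mapsto(e^{itX}Pe^{-itX},e^{itX}Qe^{-itX})$ is norm-continuous, lies in $\c$ ($e^{itX}PQe^{-itX}$ compact), lies in $\c_1$ ($e^{itX}(Q-(1-P))e^{-itX}$ compact), and joins $(P,Q)$ to $(P',UQU^*)$; by the invariance of $\iota$, $UQU^*\in\c_1(P')$ has the same index as $Q'$. By the Remark preceding this theorem, $\u_{res}^0(P')$ acts transitively on each component of $\c_1(P')$, so $W(UQU^*)W^*=Q'$ for some $W\in\u_{res}^0(P')$; writing $W=e^{iY}$ with $Y=Y^*$, $[Y,P']\in\k(\h)$ (exponential description of $\u_{res}^0(P')$), the curve $t\mapsto(P',e^{itY}(UQU^*)e^{-itY})$ stays in $\c_1$ — each $e^{itY}\in\u_{res}(P')$, which preserves $\c_1(P')$ by the Proposition of Section 5 — and joins $(P',UQU^*)$ to $(P',Q')$. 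Concatenating proves $(P,Q)$ and $(P',Q')$ are in one component.

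\emph{Necessity.} It is enough to prove $\iota$ locally constant on $\c_1$. This is where I expect the main obstacle: since the reference subspace $N(P)$ varies with $P$, $\iota$ is not the index of any single Fredholm operator depending continuously on $(P,Q)$ (e.g. $(1-P)Q+P(1-Q)$ equals $1$ plus a compact operator and has index $0$), so one must localize in $P$. Fix $(P_0,Q_0)\in\c_1$, and for $\|P-P_0\|<1$ let $v(P)=\bigl(P_0P+(1-P_0)(1-P)\bigr)\bigl(1-(P-P_0)^2\bigr)^{-1/2}$, the Kato unitary: it is norm-continuous in $P$, $v(P_0)=1$, $v(P)Pv(P)^*=P_0$, hence $v(P)N(P)=N(P_0)$. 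For $(P,Q)\in\c_1$ near $(P_0,Q_0)$ set $\widetilde Q=v(P)Qv(P)^*$; then $\widetilde Q-(1-P_0)=v(P)(Q-(1-P))v(P)^*$ is compact, so $\widetilde Q\in\c_1(P_0)$, and $\iota(P,Q)=\iota(P_0,\widetilde Q)$ by invariance (using $v(P)N(P)=N(P_0)$ and $v(P)R(Q)=R(\widetilde Q)$). Now $(P,Q)\mapsto\widetilde Q$ is continuous near $(P_0,Q_0)$ with values in $\c_1(P_0)$, and the restricted-Grassmannian index is constant on each component of $\c_1(P_0)$ (it parametrizes them, as recalled earlier in this section); hence $\iota$ is constant near $(P_0,Q_0)$, so constant on each component of $\c_1$.

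Together the two parts give: $(P,Q)$ and $(P',Q')$ lie in the same component of $\c_1$ if and only if they have equal index. (One moreover sees that $\iota$ realizes every integer, since for a fixed $P\in\p_\infty(\h)$ the restricted Grassmannian of $N(P)$ has a component of each index, so the components of $\c_1$ are in bijection with $\mathbb{Z}$.) The only delicate ingredient is the localization in the first variable in the necessity part; the rest is transport of structure by unitaries of the global and the $P$-restricted types.
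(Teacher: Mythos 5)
Your proof is correct, and its sufficiency half is essentially the paper's argument: conjugate by a global unitary $U$ with $UPU^*=P'$ to reduce to a common first coordinate, then invoke the transitive action of $\u_{res}^0(P')$ on the index-components of the restricted Grassmannian of $N(P')$. Where you genuinely depart from the paper is in the necessity half. The paper disposes of the whole theorem with the single reduction ``since $\u(\h)$ is connected, one is reduced to the case $P=P'$, where the result follows from the Corollary''; this settles sufficiency, but for necessity it is too quick, because a path in $\c_1$ joining $(P,Q)$ to $(P,Q'')$ need not keep the first coordinate fixed, so the classification of the components of the single fibre $\c_1(P)$ does not by itself show that the index is constant along such a path. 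You identify exactly this issue and repair it by localizing in the first variable: the Kato unitary $v(P)$ (norm-continuous in $P$, with $v(P_0)=1$ and $v(P)Pv(P)^*=P_0$) transports $(P,Q)$ near $(P_0,Q_0)$ into the fixed fibre $\c_1(P_0)$ without changing the index, and the local constancy of the restricted-Grassmannian index there yields local constancy of $\iota$ on all of $\c_1$. Your intermediate observation that $Q\in\c_1(P)$ is equivalent to $Q-(1-P)\in\k(\h)$, which makes the transport step transparent, agrees with the Calkin-algebra description of $\c_1(P)$ in the paper. In short, your argument supplies a complete proof of the ``only if'' direction, which the paper leaves implicit, at the cost of the extra (but standard) machinery of the Kato unitary.
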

\begin{proof}
There exists a unitary operator $U\in\u(\h)$ such that $U^*P'U=P$. Consider the pair $U^*\cdot(P',Q')=(P,U^*Q'U)$. Note that  $(P,U^*Q'U)$ belongs to the restricted Grassmannian of $N(P)$, and it has the same index as $(P',Q')$. Since $\u(\h)$ is connected, this means that one is reduced to the case $P=P'$, where the result is valid due to the above Corollary.
\end{proof}

Note that the class $\c_1$ can be described in terms of the Halmos decomposition.
\begin{prop}
Let $(P,Q)\in\c$. Then
the following are equivalent:
\begin{enumerate}
\item
 $(P,Q)\in\c_1$.
 \item
 $\dim \h_{00}<\infty$. 
 \item
 $\dim N(b)<\infty$.
 \end{enumerate}
 
In this case, the index of $Q$ in the restricted Grassmannian of $N(P)$ is given by
$$
\dim \h_{01}-\dim \h_{10}.
$$
\end{prop}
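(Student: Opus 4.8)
The plan is to derive all three equivalences from the Halmos decomposition together with the spectral resolution of $X$. In the model $\h_0\cong\l\times\l$ one has $N(P)=\h_{00}\oplus\h_{01}\oplus(\{0\}\times\l)$, and a direct block computation gives $b=(1-P)Q(1-P)|_{N(P)}=0\oplus 1\oplus S^2$, with $S^2=\sin^2(X)$ acting on the generic summand. Since $(P,Q)\in\c$ forces $C=\cos(X)$ to be compact, the eigenvalues $\gamma_n$ of $X$ increase to $\pi/2$; hence $S^2$ is bounded below and injective, $0$ is isolated in the spectrum of $b$, and $N(b)=\h_{00}$. This gives the equivalence of (2) and (3) at once. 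For (1), belonging to $\c_1(P)$ means $q'=\pi(b)=1$ in the Calkin algebra of $N(P)$, i.e. $b-1$ compact; within $\c$ the only essential spectral value of $b$ besides $1$ is $0$, and it is contributed exactly by $\h_{00}$, so $b-1$ is compact if and only if $\dim\h_{00}=\dim N(b)<\infty$.

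For the index I would again work in the Halmos model, where $R(Q)=\h_{11}\oplus\h_{01}\oplus R(Q|_{\h_0})$ and $R(P)=\h_{11}\oplus\h_{10}\oplus(\l\times\{0\})$. The claimed number is $\dim\h_{01}-\dim\h_{10}=\dim(R(Q)\cap N(P))-\dim(R(P)\cap N(Q))$, the corner index associated with the compression $PQ|_{R(Q)}:R(Q)\to R(P)$: its kernel is $R(Q)\cap N(P)=\h_{01}$, while the orthocomplement in $R(P)$ of the closure of its range is $R(P)\cap N(Q)=\h_{10}$, since on the generic part this compression is realized through the injective operator $C=\cos(X)$, which adds nothing to the kernel and has range dense in $\l\times\{0\}$. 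The plan is then to show that this corner index equals the Fredholm index that the restricted Grassmannian of $N(P)$ attaches to $Q$.

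The main obstacle is precisely this last identification. The Grassmannian index is the honest Fredholm index of $(1-P)Q|_{R(Q)}:R(Q)\to N(P)$---on the generic part implemented by the invertible $S=\sin(X)$, which is what makes this operator Fredholm while $PQ|_{R(Q)}$ is merely compact---and a direct reading of its kernel and cokernel in the Halmos model produces the \emph{other} pair of corners. Reconciling the two corner differences, and fixing the sign forced by the ``reversed order'' convention recorded after the Corollary, is the delicate point; I would approach it by moving $(P,Q)$ to a normal form with $\u_{res}^0(P)$ (which preserves both the class and the index) and comparing the indices of the two associated Fredholm problems. Once the reference operator and the orientation are pinned down, the Halmos decomposition and the behaviour of $X$ read off kernel and cokernel directly, and the stated formula $\dim\h_{01}-\dim\h_{10}$ follows.
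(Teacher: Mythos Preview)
For the three equivalences your argument is essentially the paper's: both reduce to the Halmos model, write $b=(1-P)Q(1-P)|_{N(P)}$ as $0_{\h_{00}}\oplus 1_{\h_{01}}\oplus S^2$, note that $S^2=\sin^2(X)$ is invertible on $\l$ (the eigenvalues $\sin^2\gamma_n$ are bounded away from $0$ because $C=\cos(X)$ is compact), and conclude that $N(b)=\h_{00}$, so that $b$ is Fredholm in $N(P)$ exactly when $\dim\h_{00}<\infty$.

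For the index formula the paper offers no argument at all: it simply records ``it is well known that the index of $Q$ with respect to $N(P)$ is $\dim(R(Q)\cap N(P))-\dim(N(Q)\cap R(P))$''. Your difficulty here is not a missing trick but a genuine issue with the stated formula. By the paper's own definition the restricted-Grassmannian index is the Fredholm index of $T=(1-P)|_{R(Q)}:R(Q)\to N(P)$; since $b=TT^*$ one reads off $N(T)=R(Q)\cap R(P)=\h_{11}$ and $N(T^*)=N(b)=\h_{00}$, whence $\mathrm{ind}\,T=\dim\h_{11}-\dim\h_{00}$. These are exactly the ``other pair of corners'' you ran into. By contrast, the compression $PQ|_{R(Q)}$ you analyse first is compact, hence not Fredholm, and carries no index; and in fact $\dim\h_{01}$ and $\dim\h_{10}$ can both be infinite for pairs in $\c_1$ --- take $\varphi=1$, $\psi(z)=z$ in the Hankel example, where $\h_{01}=z\h_+$ and $\h_{10}=\h_-$. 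So the displayed expression $\dim\h_{01}-\dim\h_{10}$ appears to be a slip in the paper, and no $\u_{res}^0(P)$-normal form will reconcile it with the Grassmannian index: the quantity you were actually computing, $\dim\h_{11}-\dim\h_{00}$, is the correct one.
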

\begin{proof}
The (five space) Halmos decomposition induces a (four space) decomposition of $\h$ which reduces both $P$ and $Q$. Namely,
$$
\h=\h_{00} \oplus \h_{11} \oplus \h' \oplus \h_0,
$$
where $\h'=\h_{10}\oplus\h_{01}$.
By Lemma (\ref{lema sato}), the part of $Q$ which acts on $N(P)$ must be a Fredholm operator. By the above reduction, this amounts to show that both
$0$ acting in $\h_{00}$  and $S^2$ acting in the space $\l$,  are Fredholm operators (recall notations from Section 3). The first assertion means that $\dim \h_{00}<\infty$. With respect to the second, 
$$
S^2=\sin^2(X)=\sum_{n} \sin^2(\gamma_n)P_n +E 
$$
is always a Fredholm operator (recall that $N(S)=N(X)=\{0\}$), since $0<\sin^2(\gamma_n)$ is a finite set or a sequence increasing to $1$.
If $Q\in\c_1(P)$, then $b$ is Fredholm in $N(P)$, and thus $N(b)$ is finite dimensional. Conversely, the fact that $Q\in\c(P)$ implies that the spectral decomposition of $b$ is of the form
$$
b=\sum_{n\ge 1} (1-\lambda_n)P_n' + E_1',
$$
with $1>\lambda_n>0$ a finite set or a strictly decreasing sequence converging to $0$. If $N(b)$ is finite dimensional, then  $b$ is a Fredholm operator, and thus $Q$ belongs to the restricted Grassmannian of $N(P)$, i.e., $Q\in\c_1(P)$.

If $Q$ lies in the restricted Grassmannian, it is well known that the index of $Q$ with respect to $N(P)$
is 
$$
\dim \left(R(Q)\cap N(P)\right)-\dim\left(N(Q)\cap R(P)\right)=\dim \h_{01}-\dim \h_{10}.
$$ 
\end{proof}  

\begin{ejem}
Let us return to Example 1.1.2: 
$$
\h=L^2(\mathbb{T},dt)=\h_-\oplus\h_+, \  P=P_{\varphi\h_+}^\perp , \ Q=P_{\psi\h_+}, 
$$
where $\h_+$ is the Hardy space and $\varphi,\psi:\mathbb{T}\to \mathbb{T}$  are continuous.
The $2,2$ entry $b$ of $Q$ in terms of $P$ is unitarily equivalent to
$$
P_+M_{\bar{\varphi}\psi}P_+M_{\bar{\psi}\varphi}P_+=(P_+M_{\bar{\psi}\varphi}P_+)^*P_+M_{\bar{\psi}\varphi}P_+.
$$
Note that $P_+M_{\bar{\psi}\varphi}|_{\h_+}$ is a Toeplitz operator with non vanishing continuous symbol, therefore $b$ is a Fredholm operator \cite{douglas}, and $(P,Q)\in\c_1$. The index of the pair is (minus) the winding number of the symbol $\bar{\psi}\varphi$ \cite{douglas}. Subspaces $\varphi\h_+$ with $\varphi$ continuous and non vanishing were studied in \cite{pressleysegal} and \cite{segalwilson} in connection with parametrizations of solutions of the KdV equation.
\end{ejem} 

\section{Essential projections}
Following the notation of the previous section, denote
$$
\c_\infty=\{(P,Q)\in\c: Q\in\c_\infty(P)\},
$$
the union of $\c_\infty(P)$ for all $P\in\p_\infty(\h)$.
Let $(P,Q)\in\c_\infty$. Write $Q$ as a matrix in terms of $P$ as before, 
$$
Q=\left( \begin{array}{cc} a & x \\ x^* & b \end{array} \right)
$$
with $a=\sum_{n\ge 1} \lambda_n P_n + E_1$ and $b=\sum_{n\ge 1} (1-\lambda_n)P'_n + E_1'$.
Define
$$
Q_d=\left( \begin{array}{cc} E_1 & 0 \\ 0 & \sum_n P'_n+E'_1 \end{array} \right).
$$
Note that $Q_d$ is a projection;  it is also clear that $Q_d\in\p_\infty(\h)$. Indeed, since $r(E_1)<\infty$, it follows that $\dim N(Q_d)=\infty$. If the sequence $\{\lambda_n\}$ is finite, the facts that they have finite multiplicities and that $b$ is a Fredholm operator, imply that $r(E'_1)=\infty$. If the sequence is infinite, then $r(\sum_n P'_n)=\infty$. In any case, $r(Q_d)=\infty$.  
\begin{lem}
$B=Q+Q_d-1$ is invertible in $\h$.
\end{lem}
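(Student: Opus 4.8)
The plan is to exhibit an orthogonal decomposition of $\h$ that simultaneously reduces $Q$ and $Q_d$, and on which $B=Q+Q_d-1$ becomes a direct sum of invertible blocks whose inverses are uniformly bounded. To that end, write $B$ as a $2\times2$ matrix with respect to $\h=R(P)\oplus N(P)$. Put $E_0=P_{N(a)}$ and $E_0'=P_{N(b)}$; the diagonalizations (\ref{a,b}) give $1_{R(P)}=\sum_nP_n+E_1+E_0$ and $1_{N(P)}=\sum_nP_n'+E_1'+E_0'$, so the $(2,2)$ entry $\sum_nP_n'+E_1'$ of $Q_d$ equals $1_{N(P)}-E_0'$, whence
$$
B=Q-(1-Q_d)=\left(\begin{array}{cc} a-1_{R(P)}+E_1 & x\\ x^* & b-E_0'\end{array}\right),
$$
with $(1,1)$ and $(2,2)$ entries equal, again by (\ref{a,b}), to $\sum_n(\lambda_n-1)P_n-E_0+E_1$ and $\sum_n(1-\lambda_n)P_n'-E_0'+E_1'$ respectively.

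Next I would record the structure of $x$. By Lemma \ref{simetriaespectros}, $x$ restricts to an isomorphism of $R(P_n')$ onto $R(P_n)$ for every $n$; and the relations $xx^*=a-a^2$, $x^*x=b-b^2$ in (\ref{relaciones proyeccion}) give $N(x^*)=R(E_1)\oplus R(E_0)$, $N(x)=R(E_1')\oplus R(E_0')$, and $xx^*$ acting on $R(P_n)$ as $(\lambda_n-\lambda_n^2)1_{R(P_n)}$ (similarly $x^*x$ on $R(P_n')$). It follows that the orthogonal decomposition
$$
\h=\Big(\bigoplus_{n\ge1}\big(R(P_n)\oplus R(P_n')\big)\Big)\ \oplus\ \big(R(E_1)\oplus R(E_1')\big)\ \oplus\ \big(R(E_0)\oplus R(E_0')\big)
$$
reduces $a$, $b$ and the selfadjoint operator with off-diagonal blocks $x$, $x^*$, hence reduces $Q$; since $E_1$, $E_0'$ and $\sum_nP_n'+E_1'$ are sums of pieces of this decomposition it also reduces $Q_d$, and therefore $B$. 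Reading off the blocks, $B$ acts as $1$ on $R(E_1)\oplus R(E_1')$, as $-1$ on $R(E_0)\oplus R(E_0')$, and on $R(P_n)\oplus R(P_n')$ as
$$
B_n=\left(\begin{array}{cc} (\lambda_n-1)1_{R(P_n)} & x|_{R(P_n')}\\ x^*|_{R(P_n)} & (1-\lambda_n)1_{R(P_n')}\end{array}\right).
$$

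Finally, squaring $B_n$ the off-diagonal terms cancel and both diagonal terms collapse to $(1-\lambda_n)^2+(\lambda_n-\lambda_n^2)=1-\lambda_n$, so $B_n^2=(1-\lambda_n)\,1$; hence $B_n$ is invertible with $B_n^{-1}=(1-\lambda_n)^{-1}B_n$ and $\|B_n^{-1}\|=(1-\lambda_n)^{-1/2}$. The sequence $\{\lambda_n\}$ is decreasing and contained in $(0,1)$, so $\sup_n\lambda_n=\lambda_1<1$ and $\sup_n\|B_n^{-1}\|=(1-\lambda_1)^{-1/2}<\infty$; together with the $\pm1$ blocks this makes the direct sum of all block inverses a bounded operator, which is $B^{-1}$. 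The only point that genuinely uses the hypothesis is this uniform bound: were the $\lambda_n$ allowed to approach $1$, the norms $(1-\lambda_n)^{-1/2}$ would blow up and $B$ would be merely injective with dense range rather than invertible; that this cannot occur is precisely the compactness of $a$ (because $(P,Q)\in\c$), which confines $\{\lambda_n\}$ to a decreasing sequence bounded away from $1$. Everything else is routine block arithmetic.
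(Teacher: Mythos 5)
Your proof is correct, and it takes a genuinely different route from the paper's. The paper argues indirectly: writing $B$ as a $2\times 2$ matrix with respect to $\h=R(P)\oplus N(P)$, it observes that the diagonal entries are invertible diagonal operators and the off-diagonal entries are compact, so $B$ is invertible plus compact, hence Fredholm with closed range; since $B$ is selfadjoint it then suffices to show $N(B)=\{0\}$, which is done by viewing $B=Q-(1-Q_d)$ as a difference of projections, using $N(B)=\left(N(Q)\cap R(Q_d)\right)\oplus\left(R(Q)\cap N(Q_d)\right)$, and checking by a spectral chase that both intersections are trivial. You instead diagonalize $B$ outright: the finer decomposition into the blocks $R(P_n)\oplus R(P'_n)$, $R(E_1)\oplus R(E'_1)$ and $N(a)\oplus N(b)$ does reduce $Q$ and $Q_d$ (this is exactly what Lemma \ref{simetriaespectros} together with $N(x)=N(b-b^2)$ and $N(x^*)=N(a-a^2)$ guarantees), the identities $xx^*=a-a^2$ and $x^*x=b-b^2$ from (\ref{relaciones proyeccion}) give $B_n^2=(1-\lambda_n)1$ as you claim, and the uniform bound $\|B_n^{-1}\|=(1-\lambda_n)^{-1/2}\le (1-\lambda_1)^{-1/2}$ holds because compactness of $a$ forces $\sup_n\lambda_n=\lambda_1<1$. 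You are also right to flag that uniform bound as the one step where the hypothesis is genuinely used; without it $B$ would only be injective with dense range. What each approach buys: the paper's Fredholm-plus-kernel argument is lighter on computation and reuses the standard kernel formula for a difference of projections, but it yields bare invertibility; your block computation produces an explicit inverse $B^{-1}=\bigoplus_n(1-\lambda_n)^{-1}B_n\oplus 1\oplus(-1)$ together with its exact norm, which in particular makes quantitative the estimate $\|Q-Q_d\|<1$ recorded in the Remark that follows the lemma.
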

\begin{proof}
Let $N$ (resp. $N'$) denote the orthogonal projection onto $N(a)$ (resp. $N(b)$) in $R(P)$ (resp. $N(P)$), and write
 $1_{R(P)}=E_1+N+\sum_{n\ge 1}P_n$,  and $1_{N(P)}=E'_1+N'+\sum_{n\ge 1}P'_n$. One has
$$
B=\left( \begin{array}{cc} \sum_{n\ge 1} (\lambda_n-1)P_n +E_1-N & x \\ x^* & \sum(1-\lambda_n)P'_n+E'_1-N' \end{array} \right).
$$
The diagonal entries of $B$ are invertible in $R(P)$ and $N(P)$. Indeed, they are diagonal operators with  non nil eigenvalues that  accumulate (eventually) at $-1$ and $1$, respectively. The codiagonal entries of $B$ are compact. It follows that $B$ is of the form invertible plus compact. Thus it is a Fredholm operator, and in particular it has closed range. Therefore, since $B$ is selfadjoint,  it suffices to show that it has trivial nullspace. Note that $B$ is a difference of 
projections, namely
$$
B=Q-(1-Q_d).
$$
It is an elementary fact that the nullspace of a difference of projections is
$$
N(B)=\left(N(Q)\cap N(1-Q_d)\right)\oplus \left(R(Q)\cap R(1-Q_d)\right)=\left(N(Q)\cap R(Q_d)\right)\oplus \left(R(Q)\cap N(Q_d)\right).
$$
Let us  see that $N(Q)\cap R(Q_d)=\{0\}$. Let $\xi+\eta\in R(P)\oplus N(P)=\h$ in $N(Q)\cap R(Q_d)$. Then
\begin{equation}\label{nucleo de B 1}
E_1\xi=\xi  \ \  \hbox{ and }\ \ \sum_{n\ge 1}P'_n\eta+E'_1\eta=\eta.
\end{equation}
This implies that $P_n\xi=0$ for all $n$, $N\xi=0$ and $N'\eta=0$. Also one has
\begin{equation}\label{nucleo de B 2}
\left\{\begin{array}{l} 0=\sum_{n\ge 1}\lambda_nP_n\xi+E_1\xi+x\eta=\xi+x\eta \\ 0=x^*\xi+\sum_{n\ge 1}(1-\lambda_n)P'_n\eta+E'\eta. \end{array} \right.
\end{equation}
Recall that $R(x)=\oplus_{n\ge 1}R(P_n)$ which is orthogonal to $R(E_1)$. Thus $\xi=0$ and $x\eta=0$. Since the nullspace of $x$ is $R(N')\oplus R(E'_1)$, one has that $\eta=N'\eta+E'_1\eta$. Combining this with the second equality in (\ref{nucleo de B 1}), one gets  $N'\eta=0$ and $E'\eta=\eta$ (and $P'_n\eta=0$ for all $n$). Using these facts in the second equation of (\ref{nucleo de B 2}), one obtains $\eta=0$. 

The fact that $R(Q)\cap N(Q_d)=\{0\}$ is proved in a similar fashion.
\end{proof}

\begin{rem}
Buckholtz  \cite{buckholtz} proved that a difference of projections $P_1-P_2$ is invertible if and only if $\|P_1+P_2-1\|<1$. In our case, this implies that 
$$
\|Q-Q_d\|<1.
$$
\end{rem} 

\begin{lem}
The unitary part $U$ of $B$ in the polar decomposition $B=U|B|$ belongs to $\u_{res}^0(P)$.
\end{lem}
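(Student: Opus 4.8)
The plan is to exploit the fact that $B = Q + Q_d - 1$ is selfadjoint, so that $|B| = (B^2)^{1/2}$ is a positive invertible operator and the unitary part is $U = B|B|^{-1} = B(B^2)^{-1/2}$. To show $U \in \u_{res}^0(P)$ it suffices to show two things: first that $[U,P] \in \k(\h)$, so that $U \in \u_{res}(P)$; and second that the index of $U$ is zero, so that $U$ lies in the connected component of the identity. For the first part, I would compute $B^2$ in the $2\times 2$ matrix form in terms of $P$. From the proof of the previous lemma, $B$ has invertible diagonal entries and compact off-diagonal entries; hence $B^2$ again has invertible diagonal entries (a positive invertible operator on each of $R(P)$ and $N(P)$) and compact off-diagonal entries. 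Since $B^2$ is positive and invertible, and the diagonal part $D = \mathrm{diag}(B^2)$ is positive and invertible, I would argue that $(B^2)^{1/2} - D^{1/2}$ is compact: indeed $(B^2)^{1/2}$ is a continuous (operator-monotone, or merely norm-continuous via the square-root function on a compact interval of the positive reals) function of $B^2$, $D^{1/2}$ the same function of $D$, and $B^2 - D$ is compact, so by the standard fact that $f(B^2) - f(D) \in \k(\h)$ for continuous $f$ when $B^2 - D \in \k(\h)$ (both having spectra in a common compact interval $[c,\|B\|^2]$ with $c>0$), we get $|B| = D^{1/2} + (\text{compact})$. Consequently $|B|$ commutes with $P$ modulo compacts, and so does $|B|^{-1}$; since $B$ itself commutes with $P$ modulo compacts (its off-diagonal part is compact), $U = B|B|^{-1}$ commutes with $P$ modulo compacts, i.e. $U \in \u_{res}(P)$.

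For the index computation, note that the index of $U \in \u_{res}(P)$ is the Fredholm index of $PUP|_{R(P)}$ (the $1,1$ corner), which by the previous paragraph has the same index as the $1,1$ corner of $B|B|^{-1}$ modulo compacts, and — since $|B|^{-1}$ is, mod compacts, the diagonal operator $\mathrm{diag}(a_+^{-1}, b_+^{-1})$ with $a_+, b_+$ the (invertible) $1,1$ and $2,2$ entries of $|B|$ — this index equals the Fredholm index of the $1,1$ entry of $B$, namely $\sum_{n\ge 1}(\lambda_n - 1)P_n + E_1 - N$, as an operator on $R(P)$. But this operator is a negative-definite-plus-finite-rank perturbation of an invertible operator; more precisely it is invertible on $R(P)$ (as established in the previous lemma's proof, the diagonal entries of $B$ are invertible), so its Fredholm index is $0$. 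Hence $\mathrm{ind}(U) = 0$ and $U \in \u_{res}^0(P)$ by the characterization of the zero-index component recalled in Section 5.

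The main obstacle I anticipate is the compactness of $|B| - |B|_d$, where $|B|_d$ denotes the diagonal part: one must be careful that this does not follow merely from $B^2 - (B^2)_d$ being compact unless one invokes the right continuous-functional-calculus stability lemma, using crucially that both $B^2$ and its diagonal part have spectrum bounded away from $0$ (so that $t \mapsto t^{1/2}$ can be uniformly approximated by polynomials on the relevant interval, and polynomials of "invertible-plus-compact-with-the-same-diagonal" operators preserve the "compact off-diagonal" property). Once that lemma is in hand, the rest is bookkeeping with the $2\times 2$ matrix forms already set up in the excerpt, together with the index characterization of the components of $\u_{res}(P)$ quoted in Section 5.
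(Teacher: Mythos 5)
Your proof is correct and follows essentially the same route as the paper: membership in $\u_{res}(P)$ rests on the compactness of the off-diagonal entries of $B$ (the paper phrases this abstractly as $B$ being invertible in the C$^*$-algebra $\a_P(\h)$, so that its polar decomposition stays in that algebra, while you unpack the same fact via continuous functional calculus modulo $\k(\h)$), and the vanishing of the index comes in both arguments from the invertibility of the $1,1$ entry of $B$. There is no gap.
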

\begin{proof}
As remarked in the above proof, the off-diagonal entries of $B$ (in its matrix in terms of $P$) are compact. Therefore $B$ is an invertible element in the C$^*$-algebra $\a_P(\h)$. It follows that its unitary part is a unitary element of this algebra, namely $\u_{res}(P)$. We need to show that it has index zero. The index is in fact defined in the whole invertible group of $\a_P(\h)$, and it coincides with the index of the $1,1$ entry. As it was also pointed out in the proof above, the $1,1$ entry of $B$ is invertible in $R(P)$, and thus it has trivial index.
\end{proof}

\begin{rem}
It is well known (see for instance \cite{cpr}) that if an invertible operator intertwines two selfadjoint projections, then its unitary part in the polar decomposition also does. In our case,
$$
BQ=QQ_d=Q_dB.
$$ 
Therefore $UQ=Q_dU$, or $UQU^*=Q_d$.

Note also the fact that since $B$ is selfadjoint, $U$ is a {\it symmetry} (i.e., a selfadjoint unitary: $S^*=S^{-1}=S$), so that $UQU=Q_d$.
\end{rem}

\begin{lem}
Let $E, F$ be two projections in $\p_\infty(\h)$ which commute  with $P$. Then they are unitarily equivalent with a unitary operator in $\u_{res}^0(P)$.
\end{lem}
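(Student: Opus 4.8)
Since $E$ and $F$ commute with $P$, both are block diagonal with respect to $\h=R(P)\oplus N(P)$; write
$$
E=\left(\begin{array}{cc} E_1 & 0 \\ 0 & E_2\end{array}\right),\qquad F=\left(\begin{array}{cc} F_1 & 0 \\ 0 & F_2\end{array}\right),
$$
with $E_1,F_1\in\p(R(P))$ and $E_2,F_2\in\p(N(P))$. The plan is to assemble the intertwiner out of two kinds of unitaries that manifestly lie in $\u_{res}^0(P)$: block diagonal unitaries $V_1\oplus V_2$ (they commute with $P$, so lie in $\u_{res}(P)$, and their $1,1$ entry $V_1$, being unitary in $R(P)$, has Fredholm index $0$), and unitaries $U$ differing from a block diagonal one by a finite rank operator (then $[U,P]$ is finite rank, hence compact, and $PUP|_{R(P)}$ is a finite rank perturbation of a unitary, hence Fredholm of index $0$).

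First I would match the ``large'' blocks. I treat the case actually used below, namely $E,F\in\c_\infty(P)$: then $E_2,F_2$ have infinite rank and infinite corank in $N(P)$, while $E_1,F_1$ have finite rank. Hence there is $V_2\in\u(N(P))$ with $V_2E_2V_2^*=F_2$, and conjugating by $1_{R(P)}\oplus V_2$ reduces the problem to $E_2=F_2=:C$. A further block diagonal unitary $V_1\oplus 1_{N(P)}$ moves $R(E_1)$ inside $R(F_1)$, so we may assume $R(E_1)\subseteq R(F_1)$; set $k=\dim R(E_1)\le l=\dim R(F_1)$. If $k=l$ then $E_1=F_1$ and we are done, so assume $k<l$.

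The remaining case is the heart of the matter. A block diagonal unitary cannot raise the rank of $E_1$, so one must transport the missing $l-k$ dimensions across the decomposition $\h=R(P)\oplus N(P)$, which is possible precisely because $R(C)$ and $N(C)$ are infinite dimensional subspaces of $N(P)$. Concretely, choose orthonormal systems $g_1,\dots,g_{l-k}$ in $R(C)$, $g_1',\dots,g_{l-k}'$ in $N(C)$, and an orthonormal basis $w_1,\dots,w_{l-k}$ of $R(F_1)\ominus R(E_1)$, and define $U$ to be the identity on $R(E_1)$ and on $N(F_1)=R(P)\ominus R(F_1)$, to send $g_i\mapsto w_i$ and $w_i\mapsto g_i'$, to map $R(C)\ominus\hbox{span}\{g_1,\dots,g_{l-k}\}$ isometrically onto $R(C)$, and to map $N(C)$ isometrically onto $N(C)\ominus\hbox{span}\{g_1',\dots,g_{l-k}'\}$ (these last isometries exist since an infinite dimensional space has the same dimension as any subspace of finite codimension). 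One then checks that $U$ extends to a unitary of $\h$, that $U(E_1\oplus C)U^*=F_1\oplus C=F$, that the only mixing of $R(P)$ with $N(P)$ occurs on the finite dimensional span of the $g_i,w_i,g_i'$ so that $[U,P]$ is finite rank, and that $PUP|_{R(P)}=1_{R(P)}-P_{\hbox{span}\{w_1,\dots,w_{l-k}\}}$, which is Fredholm of index $0$. Hence $U\in\u_{res}^0(P)$, and composing it with the block diagonal unitaries used above produces the desired element of $\u_{res}^0(P)$ intertwining $E$ and $F$.

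The step I expect to cost the most care is this explicit construction --- writing the rotation of the $g_i,w_i,g_i'$ together with the two ``Hilbert hotel'' shifts as a single honest unitary and verifying that its index really vanishes, so that it lands in the identity component $\u_{res}^0(P)$ and not merely in $\u_{res}(P)$. It should also be noted that some compatibility of $E$ and $F$ beyond $E,F\in\p_\infty(\h)$ is what makes the matching of the blocks possible (for instance $P$ and $1-P$ both commute with $P$ and lie in $\p_\infty(\h)$, yet no element of $\u_{res}(P)$ conjugates one into the other), and the condition $E,F\in\c_\infty(P)$ used above supplies it.
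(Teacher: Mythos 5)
Your proof is correct and follows essentially the same route as the paper's: block-diagonalize with respect to $P$, absorb the infinite blocks in $N(P)$ with a diagonal unitary, and then move the finite-rank discrepancy across the decomposition $R(P)\oplus N(P)$ by an explicit basis-permuting (``Hilbert hotel'') unitary whose off-diagonal corners have finite rank and whose $1,1$ corner is a finite-rank perturbation of the identity, hence Fredholm of index zero. Your remark that the statement requires the implicit hypothesis that $E_1,F_1$ have finite rank while $E_2,F_2$ have infinite rank and nullity (as holds for $Q_d$ in the application) is accurate --- the paper's proof assumes exactly this without stating it, and without some such hypothesis the lemma fails (e.g.\ for $E=P$, $F=1-P$).
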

\begin{proof}
In terms of $P$, one has
$$
E=\left(\begin{array}{cc} E_1 & 0 \\ 0 & E_2 \end{array} \right)\  \hbox{ and } \  F=\left(\begin{array}{cc} F_1 & 0 \\ 0 & F_2 \end{array} \right),
$$
where $E_1$ and $F_1$ have finite rank in $R(P)$ and $E_2$ and $F_2$ have infinite rank and nullity in $N(P)$. By means of  a unitary operator of the form
$$
\left(\begin{array}{cc} 1_{R(P)} & 0 \\ 0 & W \end{array} \right),
$$
one is reduced to the case $E_2=F_2$. Clearly this unitary operator belongs to $\u_{res}^0(P)$. In order to prove that $E$ and $F$ are conjugate with a unitary in $\u_{res}^0(P)$, it suffices to show that any of these projections, for instance $E$,  can be conjugated with 
$$
E_0=\left(\begin{array}{cc} 0 & 0 \\ 0 & E_2 \end{array} \right).
$$
Consider the following orthonormal bases:
\begin{itemize}
\item
$\{e_n: 1\le n\}$  an orthonormal basis of $R(E_2)$ (in $N(P)$).
\item
$\{e'_l: 1\le l\}$  an orthonormal basis of $N(P)\ominus R(E_2)$.
\item
$\{f_k: 1\le k \}$  an orthonormal basis of $R(P)$,
 with $f_1,\dots , f_N$ spanning $R(E_1)$.
 \end{itemize}
 Consider $U$ defined as follows:
 \begin{itemize}
 \item
 $U(e_n)=f_n$ if $1\le n\le N$, and  $U(e_n)=e_{n-N}$ if $n\ge N+1$.
 \item
 $U(e'_l)=e'_{l+N}$.
\item
$U(f_k)=e'_k$ if $1\le k\le N$, and $U(f_k)=f_k$ if $n\ge N+1$.
\end{itemize}
It is straightforward  that $U$ is a unitary operator. Note also that $U$ is not the identity only on a finite number of $f_k$, and thus 
$UP$ and $PU$ are of the form $P$ plus compact. Therefore $[U,P]$ is compact, i.e. $U\in\u_{res}(P)$. For the same reason, on $R(P)$, $U$ is the identity plus a finite rank operator, and thus $U$ has index zero. Finally, by construction,
$$
U\left(R(E_0)\right)=R(E_1) \hbox{ and } U\left(N(E_0)\right)=N(E_1).
$$
\end{proof}
From these facts, the main result of this section follows.

\begin{teo}
Let $P_0\in\p_\infty(\h)$.

\begin{enumerate}
\item
The action of $\u_{res}^0(P_0)$ is transitive in $\c_\infty(P_0)$. In particular, $\c_\infty(P_0)$ is connected.
\item
$\c_\infty$ is connected,
\end{enumerate}
\end{teo}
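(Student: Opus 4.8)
The plan is to assemble the three lemmas just proved. For part (1) I would fix $Q,Q'\in\c_\infty(P_0)$ and first use those lemmas to conjugate each of them to its ``diagonal'' model by an element of $\u_{res}^0(P_0)$: there are $U,U'\in\u_{res}^0(P_0)$ with $UQU^*=Q_d$ and $U'Q'(U')^*=Q'_d$. Since $Q_d$ and $Q'_d$ are projections in $\p_\infty(\h)$ that commute with $P_0$ (they are block diagonal, and their membership in $\p_\infty(\h)$ was checked above), the last of the preceding lemmas yields $V\in\u_{res}^0(P_0)$ with $VQ_dV^*=Q'_d$. As $\u_{res}^0(P_0)$ is a subgroup of $\u(\h)$, the element $W:=(U')^*VU$ lies again in $\u_{res}^0(P_0)$, and a one-line computation gives $WQW^*=(U')^*VQ_dV^*U'=(U')^*Q'_dU'=Q'$; this is the transitivity. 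Connectedness of $\c_\infty(P_0)$ then follows, since $\u_{res}^0(P_0)$ is connected (it is the identity component by definition) and the orbit map $U\mapsto UQU^*$ is norm-continuous and, by transitivity, onto $\c_\infty(P_0)$.

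For part (2), given $(P,Q),(P',Q')\in\c_\infty$, I would build a path in $\c_\infty$ between them in two stages. First I move the first coordinate: since $P,P'\in\p_\infty(\h)$ they are unitarily equivalent, so there is a norm-continuous path $U(t)$ in $\u(\h)$ with $U(0)=1$ and $U(1)PU(1)^*=P'$, and $t\mapsto U(t)\cdot(P,Q)$ is a continuous path of pairs. It stays in $\c_\infty$: joint conjugation preserves compactness of the product, and in the Calkin algebra $\pi\big((1-U(t)PU(t)^*)(1-U(t)QU(t)^*)(1-U(t)PU(t)^*)\big)$ is conjugate, via $\pi(U(t))$, to $\pi\big((1-P)(1-Q)(1-P)\big)$, hence remains a proper projection (neither $0$ nor $\pi(1-U(t)PU(t)^*)$). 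This joins $(P,Q)$ inside $\c_\infty$ to the pair $(P',\widetilde Q)$ with $\widetilde Q=U(1)QU(1)^*$.

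Second, I would join $(P',\widetilde Q)$ to $(P',Q')$ inside $\c_\infty$. Both $\widetilde Q$ and $Q'$ lie in $\c_\infty(P')$, so by part (1) there is $W\in\u_{res}^0(P')$ with $W\widetilde QW^*=Q'$. Since $\u_{res}^0(P')$ is an exponential group, $W=e^{iX}$ for some selfadjoint $X$ with $[X,P']\in\k(\h)$, and $t\mapsto e^{itX}$ is a norm-continuous path in $\u_{res}^0(P')$ from $1$ to $W$ (note $[tX,P']\in\k(\h)$). Hence $t\mapsto (P',e^{itX}\widetilde Qe^{-itX})$ is a continuous path which, by the Proposition of Section~5 stating that $\u_{res}(P')$ conjugates $\c_\infty(P')$ into itself, stays in $\c_\infty(P')\subset\c_\infty$ and joins $(P',\widetilde Q)$ to $(P',Q')$. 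Concatenating the two stages shows $\c_\infty$ is path connected, hence connected.

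The only steps needing attention in the full write-up are the verifications that the paths stay inside $\c_\infty$ and not merely inside $\c$ — which is exactly where one invokes that the ``essential type'' of a pair is preserved under joint conjugation and, for the second coordinate, under the $\u_{res}$-action — and the use of the fact that $\u_{res}^0(P_0)$ is not just connected but exponential, which is what supplies the norm-continuous lifts used above. Beyond that the argument is just bookkeeping of group elements; I do not expect a genuine obstacle once the three preceding lemmas are in hand.
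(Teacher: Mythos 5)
Your argument is correct and follows essentially the same route as the paper: conjugate $Q$ and $Q'$ to their diagonal models $Q_d$, $Q'_d$ via the first two lemmas, link the diagonal models by the third lemma, and deduce connectedness of $\c_\infty$ by first moving $P$ to $P'$ along a unitary path and then invoking part (1). Your additional verifications (the Calkin-algebra conjugation argument showing the first path stays in $\c_\infty$, and the explicit exponential lift in the second stage) merely make explicit what the paper leaves implicit.
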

\begin{proof}
Let $Q$ and $R$ be elements of $\c_\infty(P_0)$. By the first two lemmas above, $Q$ is $\u_{res}^0(P_0)$-conjugate to $Q_d$  and $R$ is $\u_{res}^0(P_0)$-conjugated to $R_d$.
$R_d$ and $Q_d$ are $\u_{res}^0(P_0)$-conjugate by the third lemma.

To prove the second assertion, suppose that $(P,Q)$ and $(P',Q')$ belong to  $\c_\infty$. Since by hypothesis $P,P'\in\p_\infty(\h)$, there exists a unitary operator $W=e^{iX}$ (with $X^*=X$) such that $WPW^*=P'$. The pairs $(P,Q)$ and $(P',WQW^*)$ are homotopic in $\c_\infty$ (for instance, by means of the curve
$(e^{itX}Pe^{-itX},e^{itX}Qe^{-itX})$). Thus,  it suffices to show that $(P',WQW^*)$ and $(P',Q')$ are homotopic in $\c_\infty$. This is the first assertion.
\end{proof}

Note that in particular, this implies that if $Q\in\c_\infty(P)$, then also $Q_d\in\c_\infty(P)$. This fact could have been obtained directly from the definition of $Q_d$.

\begin{rem}
Consider the example at the beginning of Section 1, namely let $I, J$ be measurable subsets of $\mathbb{R}^n$ of finite measure, and
put $P_I,Q_J\in\p(L^2(\mathbb{R}^n,dx))$ given by
$$
P_If=\chi_If \ \ \hbox{ and } \ \ Q_Jf= \left(\chi_J \hat{f}\right)\check{\ }.
$$ 
Lenard proved \cite{lenard} that $N(P_I)\cap N(Q_J)$ is infinite dimensional. Therefore, the matrix of $Q_J$ in terms of $P_I$ (whose first column and row are compact) has the $2,2$ entry which is not a Fredholm operator. Clearly,  it is not compact (which would mean that $Q_J$ has finite rank). Therefore  $(P_I,Q_J)\in\c_\infty$.  Moreover, given another pair $I'$, $J'$ of finite Lebesgue measure subsets of $\mathbb{R}^n$, the pairs $(P_I, Q_J)$ and $(P_{I'}, Q_{J'})$ are homotopic in $\c_\infty$.
\end{rem}

The above Remark, showing that pairs in the example by Lenard belong to $\c_\infty$, can be generalized. Recall the characterizations of $\c_0$ and $\c_1$ in terms of the Halmos decomposition.
\begin{prop}
Let $(P,Q)\in\c$. Then $(P,Q)\in\c_\infty$ if and only if  $\dim R(Q)=\infty$ and $\dim \h_{00}=\infty$.
\end{prop}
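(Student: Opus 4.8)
The plan is to read the statement off the partition $\c=\c_0\cup\c_1\cup\c_\infty$ together with the two characterizations already obtained for the other two classes. By the standing convention $P$ has infinite rank and nullity; in particular $r(P)=\infty$, so $(P,Q)$ belongs to $\c_0$ (which consists of the pairs with $r(P)<\infty$ or $r(Q)<\infty$) if and only if $\dim R(Q)<\infty$, using that a projection has finite rank precisely when it is compact. On the other hand, by the proposition characterizing $\c_1$, $(P,Q)\in\c_1$ if and only if $\dim\h_{00}<\infty$. Since the three classes are disjoint and cover $\c$, it follows that $(P,Q)\in\c_\infty$ if and only if $(P,Q)\notin\c_0$ and $(P,Q)\notin\c_1$, that is, if and only if $\dim R(Q)=\infty$ and $\dim\h_{00}=\infty$.

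If one prefers to avoid invoking the earlier propositions, I would argue directly in the Calkin quotient, in the spirit of Definition \ref{clasesP} and of the Remark on Lenard's example. Write $Q$ as a $2\times 2$ matrix with entries $a,x,x^*,b$ in terms of $P$; since $(P,Q)\in\c$, the entries $a$ and $x$ are compact, so $\pi(Q)$ has $q'=\pi(b)$ as its only possibly non-zero entry, and by Definition \ref{clasesP} one has $(P,Q)\in\c_\infty$ exactly when $q'$ is a proper projection of $\b(N(P))/\k(N(P))$, that is, $\pi(b)\neq0$ and $\pi(b)\neq1_{N(P)}$. Here $b$ is compact if and only if $Q$ is (the remaining entries being compact), i.e.\ if and only if $\dim R(Q)<\infty$; and, using the spectral normal form (\ref{a,b}) together with the orthogonal decomposition $1_{N(P)}=E'_1+P_{N(b)}+\sum_{n\ge 1}P'_n$, one computes $1_{N(P)}-b=P_{N(b)}+\sum_{n\ge 1}\lambda_nP'_n$, which is compact exactly when $\dim N(b)<\infty$. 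Since $\dim N(b)<\infty$ is equivalent to $\dim\h_{00}<\infty$, the two stated conditions follow.

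I do not expect a genuine obstacle here: the whole content is bookkeeping with the trichotomy, the Calkin quotient, and the spectral normal forms already established. The only point worth stating explicitly is how the two hypotheses enter. The condition $\dim\h_{00}=\infty$ is what keeps the pair out of the restricted Grassmannian $\c_1$, and it is used through its reformulation $\dim N(b)=\infty$ on the $2,2$ entry of $Q$; the condition $\dim R(Q)=\infty$ is exactly what excludes a finite-rank second coordinate, which would instead place the pair into $\c_0$; and the standing hypothesis $r(P)=\infty$ rules out the other way a pair of $\c$ can land in $\c_0$. Together these account for all of $\c\setminus(\c_0\cup\c_1)$, which is $\c_\infty$.
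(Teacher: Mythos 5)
Your proof is correct and follows essentially the paper's own route: the paper's proof is the one-line instruction to repeat the Calkin-algebra argument of the Remark on Lenard's example, which is exactly the bookkeeping you carry out in your second paragraph ($\pi(b)\ne 0$ iff $\dim R(Q)=\infty$, $\pi(b)\ne 1_{N(P)}$ iff $\dim N(b)=\dim\h_{00}=\infty$). Your first paragraph, reading the statement off the trichotomy and the already-proved characterizations of $\c_0$ and $\c_1$, is the same content in different packaging.
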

\begin{proof}
Use the same argument as in the above Remark.
\end{proof}
\begin{rem}
Note that $(P,Q)\in\c$ if and only if $(Q,P)\in\c$: $PQ$ is compact if and only if $QP$ is compact. There is, however,  an abuse of notation in this assertion, because we have supposed from the beginning that the first coordinate of the pair must belong to $\p_{\infty}$. Assume thus that also $Q\in\p_{\infty}$. 

Note  that if  $(P,Q)\in\c_1$, then also $(Q,P)\in\c_1$. This follows in a straightforward manner from the definition of the restricted Grassmannian, by taking adjoints. Also it is clear that the index of the reversed pair changes sign.

As a consequence (since the class $\c_0$ is explicitly excluded), it follows that $(P,Q)\in\c_\infty$ implies that $(Q,P)\in\c_\infty$.
\end{rem}

\section{Regular structure}

Let us recall some basic facts on the differential geometry of the set $\p(\h)$ (see for instance \cite{pr}, \cite{cpr}, \cite{pemenoscu}.
\begin{rem}\label{geometria de P}

\begin{enumerate}
\item
The space $\p(\h)$ is a homogeneous space under the action of the unitary group $\u(\h)$ by inner conjugation. The orbits of the action coincide with the connected components of $\p(\h)$, which are: $\p_{n,\infty}(\h)$ (projections of nullity $n$), $\p_{\infty,n}(\h)$ (projections of rank $n$) and $\p_{\infty}(\h)$ (projections of infinite rank and nullity). These components are $C^\infty$-submanifolds of $\b(\h)$.
\item
There is a natural linear connection in $\b(\h)$. If $\dim \h <\infty$, it is the Levi-Civita connection of the Riemannian metric which consists of considering the Frobenius inner product at every tangent space. It is based on the diagonal - codiagonal decompositon of $\b(\h)$. To be more specific, given $P_0\in\p(\h)$, the tangent space of $\p(\h)$ at $P_0$ consists of all selfadjoint codiagonal matrices (in terms of  $P_0$). The linear connection in $\p(\h)$ is induced by a reductive structure, where the horizontal elements at $P_0$ (in the Lie algebra of $\u(\h)$: the space of antihermitian elements of $\b(\h)$) are the codiagonal antihermitian operators. The geodesics of $\p$ which start at $P_0$ are curves of the form
\begin{equation}\label{geodesica}
\delta(t)=e^{itX}P_0e^{-itX},
\end{equation}
with $X^*=X$ codiagonal with respect to $P_0$. 
It was proved in \cite{pr} that if $P_0,P_1\in\p(\h)$ satisfy $\|P_0-P_1\|<1$, then there exists a unique geodesic (up to reparametrization) joining $P_0$ and $P_1$. This condition is not necessary for the existence of a unique geodesic. 
\item In \cite{pemenoscu} a necessary and sufficient condition was found, in order that there exists a unique geodesic joining two projections $P$ and $Q$. This is the case if and only if
$$
R(P)\cap N(Q)=N(P)\cap R(Q)=\{0\}.
$$
\item
It is sometimes useful to parametrize projections using symmetries $S$ ($S^*=S$, $S^2=1$), via the affine map
$$
P \longleftrightarrow S_P=2P-1.
$$
Some algebraic computations are simpler with symmetries. For instance, the condition that the exponent $X$ (of the geodesic) is $P_0$-codiagonal means that $X$ anti-commutes with $S_{P_0}$. Thus the geodesic (\ref{geodesica}), in terms of symmetries, can be expressed
$$
S_{\delta}(t)=e^{itX}S_{P_0}e^{-itX}=e^{2itX}S_{P_0}=S_{P_0}e^{-2itX}.
$$
\end{enumerate}
\end{rem}

Fix $P_0\in\p_\infty(\h)$. We shall see first that $\c(P_0)$ is a differentiable manifold.
If $\a$ is a C$^*$-algebra, denote by $\a_h$ the space of selfadjoint elements of $\a$.

\begin{lem}
If $Q,Q'\in\c(P_0)$ and $\|Q-Q'\|<1$, then there exists $U\in\u_{res}^0(P_0)$ such that $UQU^*=Q'$. This unitary operator $U$ can be chosen as an explicit smooth formula in terms of $Q$ and $Q'$. In particular, $Q$ and $Q'$ lie in the same {\rm(}class and{\rm)} connected component of $\c(P_0)$.
\end{lem}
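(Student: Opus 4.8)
The plan is to exhibit $U$ as the unitary part of a concrete intertwiner built from $Q$ and $Q'$. Set
$$
V=Q'Q+(1-Q')(1-Q).
$$
Then $Q'V=VQ=Q'Q$, so $V$ intertwines $Q$ and $Q'$. Using $Q^2=Q$, $Q'^2=Q'$ one computes
$$
VV^*=Q'\left(1-(Q-Q')^2\right)Q'+(1-Q')\left(1-(Q-Q')^2\right)(1-Q'),
$$
and symmetrically for $V^*V$. Since $\|Q-Q'\|<1$ we have $1-(Q-Q')^2\ge (1-\|Q-Q'\|^2)\,1>0$, whence $VV^*,V^*V\ge (1-\|Q-Q'\|^2)\,1$ and $V$ is invertible in $\b(\h)$. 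Put $U=V(V^*V)^{-1/2}$, the unitary part in the polar decomposition of $V$. By the standard fact, already used in Section 7 (see \cite{cpr}), that the unitary part of an invertible operator intertwining two orthogonal projections again intertwines them, we get $UQU^*=Q'$. Since $V$ is a polynomial in $Q,Q'$ and $t\mapsto t^{-1/2}$ is $C^\infty$ on positive invertibles, $(Q,Q')\mapsto U$ is a $C^\infty$ map on the open set $\{\|Q-Q'\|<1\}$; this is the claimed explicit smooth formula.

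It remains to check that $U\in\u_{res}^0(P_0)$. Write $Q$ and $Q'$ as $2\times 2$ matrices in terms of $P_0$,
$$
Q=\left(\begin{array}{cc} a & x \\ x^* & b \end{array}\right),\qquad Q'=\left(\begin{array}{cc} a' & x' \\ x'^* & b' \end{array}\right),
$$
where $a,x,a',x'$ are \emph{compact}, precisely because $Q,Q'\in\c(P_0)$. Expanding $V$, one sees that its off-diagonal entries are compact and that its $1,1$ entry equals $1_{R(P_0)}+(\hbox{compact})$. Hence $[V,P_0]\in\k(\h)$, i.e. $V$ is an invertible element of the C$^*$-algebra $\a_{P_0}(\h)$; therefore $|V|=(V^*V)^{1/2}$ and $|V|^{-1}$ lie in $\a_{P_0}(\h)$ (continuous functional calculus), and $U=V|V|^{-1}\in\u_{res}(P_0)$. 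Finally, the index of an invertible in $\a_{P_0}(\h)$ is the Fredholm index of its $1,1$ entry (as in Section 5); since $|V|^{-1}$ is positive and invertible in $\a_{P_0}(\h)$, it has index $0$, so $ind(U)=ind(V)=ind\left(1_{R(P_0)}+\hbox{compact}\right)=0$. Thus $U\in\u_{res}^0(P_0)$.

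For the last statement, write $U=e^{iX}$ with $X^*=X$ and $[X,P_0]\in\k(\h)$, which is possible since $\u_{res}^0(P_0)$ is the exponential of its Lie algebra. Then $t\mapsto e^{itX}Qe^{-itX}$ is a continuous path joining $Q$ to $UQU^*=Q'$, and by the Proposition of Section 5 (that each $e^{itX}\in\u_{res}(P_0)$ preserves the class $\c_x(P_0)$) every point of this path lies in the same class $\c_x(P_0)$ to which $Q$ belongs. Hence $Q$ and $Q'$ lie in the same class and the same connected component of $\c(P_0)$.

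The matrix expansions are routine; the only step needing a little care is the index computation, and there the whole point is that the $1,1$-corner of $V$ is a compact perturbation of $1_{R(P_0)}$ — which is exactly where the hypothesis $Q,Q'\in\c(P_0)$ enters.
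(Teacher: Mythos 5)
Your proof is correct, but it follows a genuinely different route from the paper's. The paper invokes the uniqueness of the minimal geodesic between projections at distance less than one: it writes $Q'=e^{iX}Qe^{-iX}$ with $X^*=X$ $Q$-codiagonal, $\|X\|<\pi/2$, recovers $X$ as the holomorphic logarithm $X=-\tfrac{i}{2}\log\bigl((2Q'-1)(2Q-1)\bigr)$, and then checks that $(2Q'-1)(2Q-1)$, hence $X$, lies in $\a_{P_0}$. You instead take the canonical intertwiner $V=Q'Q+(1-Q')(1-Q)$, prove invertibility directly from $VV^*=Q'\bigl(1-(Q-Q')^2\bigr)Q'+(1-Q')\bigl(1-(Q-Q')^2\bigr)(1-Q')\ge (1-\|Q-Q'\|^2)\,1$, and take the unitary part of the polar decomposition; membership in $\u_{res}(P_0)$ and the vanishing of the index then follow from the same matrix observation (the $1,1$ corner of $V$ is $1_{R(P_0)}$ plus compact) that the paper uses for $(2Q'-1)(2Q-1)$. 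Your version is more self-contained — it avoids the geodesic existence/uniqueness machinery of \cite{pr} and gives an explicit lower bound witnessing invertibility — and all the facts you borrow (the unitary part of an intertwiner intertwines, the index of an invertible in $\a_{P_0}$ is that of its $1,1$ entry, $\u_{res}^0(P_0)$ is exponential) are stated elsewhere in the paper. What the paper's construction buys, and yours does not directly provide, is that its $U=e^{iX}$ comes with a \emph{$Q$-codiagonal} exponent $X\in(\a_{P_0})_h$ of norm less than $\pi/2$: this is precisely the datum used afterwards to build the local charts $\v_Q$ in Remark \ref{remark geometria} and to prove Corollary \ref{<1} on geodesics remaining in $\c(P_0)$. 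Your final step, writing $U=e^{iX}$ via surjectivity of the exponential on $\u_{res}^0(P_0)$, yields an exponent in $\a_{P_0}$ but with no codiagonality or norm control, so for the later applications the paper's argument (or an extra verification that your $U$ coincides with the direct rotation) would still be needed.
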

\begin{proof}
If $\|Q-Q'\|<1$, then there exists a unique geodesic joining $Q$ and $Q'$ in $\p(\h)$: $Q'=e^{iX}Qe^{-iX}$ for $X^*=X$ $Q$-codiagonal with $\|X\|<\pi/2$. 
As remarked in \cite{cpr}, the fact that $X$ is $Q$-codiagonal implies that $X$ anti-commutes with $2Q-1$. Then
$$
2Q'-1=e^{iX}(2Q-1)e^{-iX}=e^{2iX}(2Q-1).
$$
Thus
$$
e^{2iX}=(2Q'-1)(2Q-1).
$$
Since $\|2iX\|<\pi$, the spectrum of $(2Q'-1)(2Q-1)$ is contained in the subset $\{e^{it}: t\in(-\pi,\pi)\}$ of the unit circle, and thus $X$ can be recovered as a continuous (in fact holomorphic) logarithm of $(2Q'-1)(2Q-1)$,
$$
X=-\frac{i}{2} \log\left((2Q'-1)(2Q-1)\right).
$$
Note that  both $(2Q'-1)(2Q-1)P_0$ and $P_0(2Q'-1)(2Q-1)$ are of the form $P_0$ plus compact. It follows that $[(2Q'-1)(2Q-1),P_0]$ is compact, and thus $(2Q'-1)(2Q-1)\in\u_{res}(P_0)$. This 
implies that the exponent $X$ belongs to $\a_{P_0}$ (recall that the exponential map is a diffeomorphism between 
 exponents $X^*=X$  in $\a_{P_0}$ of norm less than $\pi$ and unitaries $U$ in $\u_{res}(P_0)$ such that $\|U-1\|<2$). 
\end{proof}
\begin{rem}\label{remark geometria}
In particular, the above result  provides a way to parametrize elements $Q'\in\c(P_0)$ in the vicinity of a given $Q\in\c(P_0)$. Namely, let 
$$
\v_Q=\{Q'\in\c(P_0): \|Q'-Q\|<1\}.
$$
For each $Q'\in\v_Q$, there exists a unique $X=X_Q(Q')$, $X^*$, $\|X\|<\pi/2$, which is $Q$-codiagonal and belongs 
to $\a_{P_0}$, such that $e^{iX}Qe^{-iX}=Q'$. 

Conversely, to each $X$ as above, there corresponds an element $Q'=e^{iX}Qe^{-iX}\in\c(P_0)$, with $\|Q'-Q\|<1$. Both maps
$$
Q'\mapsto X \ \hbox{ and } \ X\mapsto Q'
$$
are smooth, and each one is the inverse of the other. Thus one has defined a local chart $\v_Q$ for any 
$Q\in\c(P_0)$, which is modelled in an open ball of $(\a_{P_0})_h$.
\end{rem}
\begin{coro}
For any $P_0\in\p_\infty(\h)$, the set $\c(P_0)$ is a smooth manifold modelled in $\a_{P_0}(\h)_h$.
\end{coro}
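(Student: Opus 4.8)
The plan is to glue the local charts produced by the preceding Lemma and by Remark \ref{remark geometria} into a $C^\infty$-atlas for $\c(P_0)$; the only thing really left to check is that the change-of-chart maps are smooth.

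First I would fix notation. Note that $\c(P_0)\subseteq\a_{P_0}(\h)$: if $P_0Q$ is compact then so is $(P_0Q)^*=QP_0$, hence $[Q,P_0]$ is compact. For $Q\in\c(P_0)$ let ${\cal X}_Q$ denote the set of $Q$-codiagonal selfadjoint elements of $\a_{P_0}(\h)$, i.e. of $X$ with $X^*=X$, $[X,P_0]\in\k(\h)$ and $QX+XQ=X$; this is a norm-closed real subspace of $\a_{P_0}(\h)_h$, hence a Banach space. It is the natural candidate for the tangent space: differentiating a curve $t\mapsto Q(t)$ in $\c(P_0)$ through $Q$ shows that $\dot Q(0)$ is $Q$-codiagonal, selfadjoint and has compact off-diagonal corner in the decomposition $\h=R(P_0)\oplus N(P_0)$, so $\dot Q(0)\in{\cal X}_Q$. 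By Remark \ref{remark geometria}, the set $\v_Q=\{Q'\in\c(P_0):\|Q'-Q\|<1\}$, which is open in $\c(P_0)$, is carried homeomorphically by
$$
\varphi_Q:\ Q'\longmapsto X_Q(Q')=-\frac{i}{2}\log\left((2Q'-1)(2Q-1)\right)
$$
onto the open ball of radius $\pi/2$ in ${\cal X}_Q$, with smooth inverse $X\mapsto e^{iX}Qe^{-iX}$; the Remark also records that this inverse really does land in $\v_Q$. Since $Q\in\v_Q$, the family $\{(\v_Q,\varphi_Q)\}_{Q\in\c(P_0)}$ covers $\c(P_0)$.

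It remains to verify the cocycle condition. If $\v_{Q_1}\cap\v_{Q_2}\ne\emptyset$, then $\varphi_{Q_1}(\v_{Q_1}\cap\v_{Q_2})$ is open in ${\cal X}_{Q_1}$, and on it
$$
\varphi_{Q_2}\circ\varphi_{Q_1}^{-1}:\ X\longmapsto -\frac{i}{2}\log\left(\left(2e^{iX}Q_1e^{-iX}-1\right)(2Q_2-1)\right).
$$
This is a composition of the entire map $X\mapsto e^{iX}$, the smooth conjugation $\mathrm{Ad}(e^{iX})$ applied to $Q_1$, the affine map $T\mapsto 2T-1$, right multiplication by the fixed invertible operator $2Q_2-1$, and the holomorphic logarithm. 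By the holomorphic functional calculus the last map is holomorphic, hence $C^\infty$, on the open set of invertible operators whose spectrum misses $(-\infty,0]$; and the operators $\left(2e^{iX}Q_1e^{-iX}-1\right)(2Q_2-1)$ stay in that set because $\|e^{iX}Q_1e^{-iX}-Q_2\|<1$ throughout the overlap, this inequality being built into the definition of $\v_{Q_2}$ and being exactly the hypothesis under which the Lemma produces $\varphi_{Q_2}$. Hence each transition map is smooth, and by the Lemma its values lie in ${\cal X}_{Q_2}$. Therefore $\{(\v_Q,\varphi_Q)\}$ is a $C^\infty$-atlas and $\c(P_0)$ is a smooth Banach manifold whose chart at $Q$ is modelled on the subspace ${\cal X}_Q$ of $\a_{P_0}(\h)_h$; all the ${\cal X}_Q$ with $Q$ in one connected component of $\c(P_0)$ are mutually isomorphic, since if $\|Q-Q'\|<1$ and $UQU^*=Q'$ with $U\in\u_{res}^0(P_0)$ as in the Lemma, then $\mathrm{Ad}(U)$ maps ${\cal X}_Q$ isomorphically onto ${\cal X}_{Q'}$.

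I do not expect a genuine obstacle. The preceding Lemma and Remark \ref{remark geometria} already supply the charts together with the smoothness of both the chart and its inverse, so the corollary reduces to the bookkeeping above. The two spots that deserve a little care are the identification of the model space ${\cal X}_Q$ as a closed subspace of $\a_{P_0}(\h)_h$, and the observation that the logarithm in the transition maps is always evaluated on the open set where it is holomorphic, which is guaranteed by the defining condition $\|Q'-Q\|<1$ of the charts $\v_Q$.
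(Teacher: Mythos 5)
Your proposal is correct and follows exactly the route the paper intends: the Corollary is stated without proof because it is the immediate assembly of the charts $\v_Q$ from the preceding Lemma and Remark \ref{remark geometria} into an atlas, which is precisely what you carry out. Your explicit verification of the transition maps via the holomorphic logarithm, and your identification of the model space as the closed subspace of $Q$-codiagonal elements of $\a_{P_0}(\h)_h$ (with the $\mathrm{Ad}(U)$ isomorphisms between them), is if anything slightly more careful than the paper's own phrasing.
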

As remarked in Section 2, the subset $\c_0$ of pairs in $\c$ where either of the projections have finite rank, decomposes as a discerete union of components parametrized by rank and nullity, It is nor difficult to prove that each one of these components are differentiable manifolds. We are though intereseted in the non trivial pairs in $\c$: $\c\setminus \c_0$, comprising the components of $\c_1$ and $\c_\infty$.

\begin{teo}
The set 
$$
\c'=\c\setminus \c_0=\{(P,Q): P,Q \in\p_\infty(\h), PQ \hbox{ is compact}\}
$$
is a smooth differentiable manifold.
\end{teo}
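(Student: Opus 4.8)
The plan is to show that $\c'$ is \emph{locally a product}: in a neighbourhood of a pair $(P_0,Q_0)\in\c'$ it will look like an open subset of the manifold $\p_\infty(\h)$ times an open subset of the manifold $\c(P_0)$, both of which have already been shown to be smooth manifolds (Remark \ref{geometria de P} and the corollary above). The whole construction hinges on a smooth local section of the orbit map $\u(\h)\to\p_\infty(\h)$, $U\mapsto UP_0U^*$, around $P_0$, which is read off the canonical chart of $\p_\infty(\h)$ recalled in Remark \ref{geometria de P}.

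Fix $(P_0,Q_0)\in\c'$. For $P\in\p_\infty(\h)$ with $\|P-P_0\|<1$, let $Z(P)=-\frac{i}{2}\log((2P-1)(2P_0-1))$ be the $P_0$-codiagonal selfadjoint operator with $\|Z(P)\|<\pi/2$ and $e^{iZ(P)}P_0e^{-iZ(P)}=P$; as in Remark \ref{geometria de P}, $P\mapsto Z(P)$ and $Z\mapsto e^{iZ}P_0e^{-iZ}$ are mutually inverse $C^\infty$ diffeomorphisms. Put $V_P=e^{iZ(P)}$, so $V_{P_0}=1$, $V_PP_0V_P^*=P$, and $P\mapsto V_P$ is smooth. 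The key identity is
$$
P_0\,(V_P^*QV_P)=V_P^*(PQ)V_P ,
$$
so that, for $\|P-P_0\|<1$, the pair $(P,Q)$ lies in $\c$ if and only if $V_P^*QV_P\in\c(P_0)$. As $V_P^*QV_P$ is a unitary conjugate of $Q$, conjugation by $V_P$ is a homeomorphism of $\c(P)$ onto $\c(P_0)$ taking $\p_\infty(\h)$ to itself, and hence $(P,Q)\mapsto(P,V_P^*QV_P)$ is a homeomorphism of $\{(P,Q)\in\c':\|P-P_0\|<1\}$ onto $\{(P,\widetilde Q):\|P-P_0\|<1,\;\widetilde Q\in\c(P_0)\cap\p_\infty(\h)\}$.

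I would then take, as a chart around $(P_0,Q_0)$, the map
$$
(P,Q)\ \longmapsto\ (Z(P)\,,\,X_{Q_0}(V_P^*QV_P)),
$$
with domain $\v_{(P_0,Q_0)}=\{(P,Q)\in\c':\;\|P-P_0\|<1,\;\|V_P^*QV_P-Q_0\|<1\}$, where $X_{Q_0}$ is the chart of $\c(P_0)$ at $Q_0$ from Remark \ref{remark geometria} (thus $e^{iX_{Q_0}(\widetilde Q)}Q_0e^{-iX_{Q_0}(\widetilde Q)}=\widetilde Q$). Its values lie in an open ball of the product of the local models of $\p_\infty(\h)$ at $P_0$ and of $\c(P_0)$ at $Q_0$, and its candidate inverse is
$$
(Z,X)\ \longmapsto\ (e^{iZ}P_0e^{-iZ}\,,\,e^{iZ}e^{iX}Q_0e^{-iX}e^{-iZ}),
$$
which is manifestly $C^\infty$ into $\b(\h)\times\b(\h)$ and lands in $\c'$: both coordinates are unitary conjugates of $P_0$ and $Q_0$ (hence in $\p_\infty(\h)$), and $PQ=e^{iZ}(P_0\,e^{iX}Q_0e^{-iX})e^{-iZ}$ is compact since $e^{iX}Q_0e^{-iX}\in\c(P_0)$. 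That the two maps are mutually inverse is immediate from the defining properties of $Z(\cdot)$ and $X_{Q_0}(\cdot)$ and from $V_P^*V_P=1$, and a transition map between two overlapping charts of this form is a composition of $Z(\cdot)$, $X_{Q_0}(\cdot)$, exponentials, products and inversions, hence $C^\infty$; thus $\c'$ is a $C^\infty$ manifold. The construction is uniform in $(P_0,Q_0)\in\c'$, regardless of whether the pair lies in $\c_1$ or in $\c_\infty$.

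The one step with real content is the reduction in the second paragraph --- producing the smooth section $P\mapsto V_P$ and using the identity above to turn ``$PQ$ compact, $P$ near $P_0$'' into ``$P_0\widetilde Q$ compact, $\widetilde Q$ near $Q_0$''. After that the statement merely asserts that $\c'$ is a locally trivial fibration over $\p_\infty(\h)$ with fibre $\c(P_0)$, and the chart above is nothing but the product of the local chart of $\p_\infty(\h)$ with the local chart of the fibre. I do not expect a genuine obstacle beyond this: verifying that $\v_{(P_0,Q_0)}$ is open in $\c'$, that the chart is a homeomorphism onto an open subset of the model, and that overlaps are $C^\infty$, is routine once the section is available, just as in the proof that $\c(P_0)$ is a manifold.
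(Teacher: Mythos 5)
Your proposal is correct and follows essentially the same route as the paper: use the canonical chart of $\p_\infty(\h)$ at $P_0$ to get a smooth unitary section $P\mapsto e^{iZ(P)}$, conjugate $Q$ back into $\c(P_0)$, and then apply the chart of $\c(P_0)$ at $Q_0$, yielding the product chart $(P,Q)\mapsto(Z,X)$ with the same inverse $(Z,X)\mapsto(e^{iZ}P_0e^{-iZ},e^{iZ}e^{iX}Q_0e^{-iX}e^{-iZ})$. The only additions are cosmetic (the explicit logarithm formula for $Z(P)$ and the fibration language), so there is nothing to change.
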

\begin{proof}
Fix a pair $(P_0,Q_0)\in\c'$. We shall exhibit a local chart for $\c'$ near this pair. Let $(P,Q)\in\c'$ such that $\|P-P_0\|<1$. Then, as remarked above, there exists $X=X(P)$ (a smooth map in terms of $P$, with $X(P_0)=0$), $X^*=X$, $\|X\|<\pi/2$ and $X$ is $P_0$-codiagonal, such that 
$$
P=e^{iX}P_0 e^{-iX}.
$$
Then the pair $e^{-iX}(P,Q)e^{iX}=(P_0,e^{-iX}Qe^{iX})$ belongs to $\c(P_0)$. Let $(P,Q)$ be close enough to $(P_0,Q_0)$ so that $e^{-iX}Qe^{iX}$ lies in the local chart $\v_{Q_0}$ for $\c(P_0)$ around $Q_0$  constructed above. Note that if $P\to P_0$, then $e^{iX}\to 1$, so that
$$
\|e^{-iX}Qe^{iX}-Q_0\|\le \|e^{-iX}Qe^{iX}-Q\|+\|Q-Q_0\|
$$
is arbitrarily small if $(P,Q)$ is close  to $(P_0,Q_0)$. The chart for $(P_0,Q_0)$ is  the open set
$$
\v_{(P_0,Q_0)}=\{(P,Q)\in\c': \|P-P_0\|<1 \hbox{ and } e^{-iX}Qe^{iX}\in\v_{Q_0}\}.
$$
If $e^{-iX}Qe^{iX}\in\v_{Q_0}$, then there exists a unique  $Y=X_{Q_0}(e^{-iX}Qe^{iX})$ in $\a_{P_0}$, $Y^*=Y$, $\|Y\|<\pi/2$, which is $Q_0$-codiagonal, such that 
$$
e^{-iX}Qe^{iX}=e^{iY}Q_0e^{-iY}.
$$
Denote $\b_{P_0}=\{X\in\b_h(\h):\|X\|<\pi/2 \hbox{ and } X \hbox{ is } P_0-\hbox{codiagonal}\}$ (and accordingly consider $\b_{Q_0}$). Consider the map 
$$
\Psi=\Psi_{(P_0,Q_0)}:\v_{(P_0,Q_0)}\to \b_{P_0}\times (\b_{Q_0}\cap(\a_{P_0})_h)\subset \b_h(\h)\times (\a_{P_0})_h
$$
given by
$$
\Psi(P,Q)=(X,Y).
$$
The inverse of $\Psi$ is the map
$$
\Psi^{-1}(X,Y)=(e^{iX}P_0e^{-iX},e^{iY}e^{iX}Q_0e^{-iY}e^{-iX}).
$$
\end{proof}

Let us return to $\c(P_0)$ for a fixed $P_0\in\c$, and the fact stated in Remark (\ref{remark geometria}). This remark says a bit more about the geometry of $\c(P_0)$ as a submanifold of $\p(\h)$. Recall from the facts pointed out at the beginning of this section, that two projections at distance less than one are joined by a unique minimal geodesic.
\begin{coro}\label{<1}
Let $Q,Q'\in\c(P_0)$ such that $\|Q-Q'\|<1$. Then the unique geodesic of $\p(\h)$ remains inside $\c(P_0)$.
\end{coro}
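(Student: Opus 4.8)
The plan is to write the unique geodesic of $\p(\h)$ between $Q$ and $Q'$ explicitly as a one-parameter family of conjugates of $Q$ by unitaries in $\u_{res}(P_0)$, and then to observe that such conjugation leaves $\c(P_0)$ invariant. First I would invoke the Lemma proved just above: since $\|Q-Q'\|<1$, there is a selfadjoint $X$, which is $Q$-codiagonal, has $\|X\|<\pi/2$, lies in $\a_{P_0}(\h)_h$ (so in particular $[X,P_0]\in\k(\h)$), and satisfies $e^{iX}Qe^{-iX}=Q'$; concretely $X=-\frac{i}{2}\log\bigl((2Q'-1)(2Q-1)\bigr)$. Because $X$ is $Q$-codiagonal with $\|X\|<\pi/2$, the curve $\delta(t)=e^{itX}Qe^{-itX}$, $t\in[0,1]$, is a geodesic of $\p(\h)$ from $\delta(0)=Q$ to $\delta(1)=Q'$ (Remark \ref{geometria de P}(2)), and since $\|Q-Q'\|<1$ it is the unique minimal geodesic between them. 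Thus the statement reduces to showing $\delta(t)\in\c(P_0)$ for all $t$.

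The next step is to note that $e^{itX}\in\u_{res}(P_0)$ for every $t$. This is where the only genuine computation occurs: from $[X,P_0]\in\k(\h)$ one has
$$
e^{itX}P_0e^{-itX}-P_0=\int_0^t e^{isX}\,i[X,P_0]\,e^{-isX}\,ds,
$$
whose right-hand side is compact; hence $[e^{itX},P_0]\in\k(\h)$. Finally, $\u_{res}(P_0)$ stabilizes $\c(P_0)$: if $U\in\u_{res}(P_0)$ and $R\in\c(P_0)$, then $P_0URU^*=UP_0RU^*+[P_0,U]RU^*$ is compact, since $P_0R$ and $[P_0,U]$ are both compact; equivalently one may quote the Proposition of Section 5, which says that conjugation by $\u_{res}(P_0)$ preserves each class $\c_x(P_0)$. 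Applying this with $U=e^{itX}$ gives $\delta(t)=e^{itX}Qe^{-itX}\in\c(P_0)$ for all $t\in[0,1]$, which is exactly the assertion.

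I do not expect a serious obstacle: the corollary essentially repackages the explicit joining unitary furnished by the preceding Lemma, observing that every intermediate time $t$ gives the same kind of conjugation. The only mildly technical point is the invariance of the commutator condition along the one-parameter group, handled by the integral formula above; everything else is bookkeeping with the matrix decomposition in terms of $P_0$.
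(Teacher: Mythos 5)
Your proposal is correct and follows the same route as the paper: the preceding Lemma puts the geodesic's exponent $X$ in $(\a_{P_0})_h$, so $e^{itX}\in\u_{res}(P_0)$ for all $t$, and the Section 5 Proposition (invariance of $\c(P_0)$ under conjugation by $\u_{res}(P_0)$) finishes the argument. The paper's own proof is just a one-line version of this; your integral-formula verification that $[e^{itX},P_0]\in\k(\h)$ is a correct (if slightly more explicit) substitute for quoting the exponential description of $\u_{res}^0(P_0)$.
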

\begin{proof}
If $Q,Q'\in\c(P_0)$ with $\|Q-Q'\|<1$, then then the unique (selfadjoint, $Q$-codiagonal) exponent 
$X=X_Q(Q')$ with $\|X\|<\pi/2$ such that $e^{iX}Qe^{-iX}=Q'$, belongs to $\a_{P_0}$.
\end{proof}

Recall from Remark \ref{geometria de P}, the fact that if a weaker condition holds, namely
$$
R(Q)\cap N(Q')=N(Q)\cap R(Q')=\{0\},
$$
then there exists a unique $X$ as above. A natural question is the following. Suppose that this condition for 
uniqueness holds, but $\|Q-Q'\|=1$,  does the unique geodesic $\gamma(t)=e^{iX}Qe^{-iX}$ lie  in $\c(P)$?

{\sc (Esteban Andruchow)} {Instituto de Ciencias,  Universidad Nacional de Gral. Sar\-miento,
J.M. Gutierrez 1150,  (1613) Los Polvorines, Argentina and Instituto Argentino de Matem\'atica, `Alberto P. Calder\'on', CONICET, Saavedra 15 3er. piso,
(1083) Buenos Aires, Argentina. e-mail: eandruch@ungs.edu.ar}

{\sc (Gustavo Corach)} {Instituto Argentino de Matem\'atica, `Alberto P. Calder\'on', CONICET, Saavedra 15 3er. piso, (1083) Buenos Aires, Argentina, and Depto. de Matem\'atica, Facultad de Ingenier\'\i a, Universidad de Buenos Aires, Argentina. e-mail: gcorach@fi.uba.ar}

\end{document}